\begin{document}

\begin{frontmatter}

\title{The Number of Harmonic Frames of Prime Order\tnoteref{t1}}
\tnotetext[t1]{Submitted to {\it Linear Algebra and its Applications}
  15 April 2009. Accepted 22 September 2009. Published in Volume 432,
  Issue 5, 15 February 2010, Pages 1105-1125. LaTeX file last compiled
  2 September 2012.}

\author{Matthew J. Hirn}
\ead{matthew.hirn@yale.edu}
\ead[url]{www.math.yale.edu/$\sim$mh644}

\address{
Yale University \\
Department of Mathematics \\
P.O. Box 208283 \\
New Haven, Connecticut 06520-8283 \\
USA
}

\begin{abstract}
Harmonic frames of prime order are investigated. The primary focus is the enumeration of inequivalent harmonic frames, with the exact number given by a recursive formula. The key to this result is a one-to-one correspondence developed between inequivalent harmonic frames and the orbits of a particular set.  Secondarily, the symmetry group of prime order harmonic frames is shown to contain a subgroup consisting of a diagonal matrix as well as a permutation matrix, each of which is dependent on the particular harmonic frame in question.
\end{abstract}

\begin{keyword}
finite unit norm tight frame, harmonic frame, symmetry group of frame

{\it AMS classification codes:}  primary 42C15; secondary 05A05, 20E07
\end{keyword}

\end{frontmatter}

\numberwithin{equation}{section}
\parindent 0pt

\bibliographystyle{plain}

\theoremstyle{plain}
\newtheorem{theorem}{Theorem}[section]
\newtheorem{lemma}[theorem]{Lemma}
\newtheorem{conjecture}[theorem]{Conjecture}
\newtheorem{proposition}[theorem]{Proposition}
\newtheorem{corollary}[theorem]{Corollary}

\theoremstyle{definition}
\newtheorem{definition}[theorem]{Definition}
\newtheorem{example}[theorem]{Example}
\newtheorem{remark}[theorem]{Remark}

\section{Introduction}\label{sec: intro}

\subsection{DFT-FUNTFs}

Let $N,d \in \mathbb{N} = \{1,2,3,\ldots\}$, $d \leq N$, and consider
the space $\mathbb{C}^d$. An ordered set $X = (x_1,\ldots,x_N) \subset \mathbb{C}^d$ is a {\it finite frame} for $\mathbb{C}^d$ if there exist constants $A,B > 0$ such that
\begin{equation}\label{eqn:frame}
A\lVert f \rVert^2 \leq \sum_{j=1}^N |\langle f,x_j\rangle |^2 \leq B\lVert f\rVert^2, \quad \forall \enspace f \in \mathbb{C}^d.
\end{equation}
The numbers $A,B$ are called the {\it frame bounds}. It is well known that any spanning set is a frame for $\mathbb{C}^d$, while every frame is itself a spanning set. A frame is {\it tight} if one can choose $A = B$ in the definition, i.e., if
\begin{equation}\label{eqn:tight}
\sum_{j=1}^N |\langle f,x_j\rangle |^2 = A\lVert f\rVert^2, \quad \forall \enspace f \in \mathbb{C}^d.
\end{equation}
Finally, a frame is {\it unit norm} if
\begin{equation}\label{eqn:unit norm}
\lVert x_j \rVert = 1, \quad \forall \enspace j=1,\ldots ,s.
\end{equation}
If $X$ satisfies (\ref{eqn:tight}) and (\ref{eqn:unit norm}), then we say $X$ is a {\it finite unit norm tight frame} (FUNTF) for $\mathbb{C}^d$. In this case, the frame bounds satisfy $A = B = N/d$. In particular, if $X$ is a FUNTF with frame bounds $A = B = 1$, then $X$ is an orthonormal basis.\\

There has been much literature on the subject of finite tight frames,
see for example \cite{benedetto:fntf, kovacevic:entfwe,
  kovacevic:qfewe, waldron:itf, zimmermann:ntffd} and references therein. One special class of FUNTFs is obtained by considering the un-normalized Discrete Fourier Transform (DFT) matrix,
\begin{equation}\label{eqn:dft matrix}
D_N := (e^{2\pi imn/N})_{m,n = 0}^{N-1}.
\end{equation}
Choose any distinct $d$ columns, $n_1,\ldots,n_d$, of $D_N$, with $n_j
\in \{0,\ldots,N-1\}$ for each $j = 1,\ldots,d$, and form the following $N$ vectors in $\mathbb{C}^d$,
\begin{equation}\label{eqn:dft-funtf}
\phi_m = \frac{1}{\sqrt{d}} (e^{2\pi imn_1/N}, e^{2\pi imn_2/N}, \ldots, e^{2\pi imn_d/N}) \in \mathbb{C}^d, \quad m = 0,1,\ldots,N-1.
\end{equation}
\\
It is well known that $\Phi = (\phi_0,\ldots ,\phi_{N-1})$ is a FUNTF for $\mathbb{C}^d$.  Any frame of this type is called a DFT-FUNTF, and collectively, they form a subset of a special class of frames known as harmonic frames (see \cite{waldron:tfts, waldron:ocahfonvicd} as well as section \ref{subsec:harmonic frames}). We call the column choices $n_1,\ldots,n_d$ the {\it generators} of the frame.\\

A basic way of counting the number of DFT-FUNTFs is inspired by the following observation. For any vector $f \in \mathbb{C}^d$, the frame $\Phi$ gives the following representation of $f$:
\begin{equation*}
f \mapsto \left(\langle f,\phi_m \rangle\right)_{m=0}^{N-1} \in \mathbb{C}^N.
\end{equation*}
Therefore, even a re-indexing of the frame would change the
representation it gives for a fixed $f$. Thus, we could count the
number of ordered DFT-FUNTFs. To accomplish this task, we observe that
there are $N$ columns in $D_N$ and we select $d$ of them.  Since each ordered
combination of column choices $n_1,\ldots ,n_d$ gives a distinct
frame, there are $N(N-1)\cdots (N-d+1)$ ordered DFT-FUNTFs.\\

There are of course other ways by which we may distinguish frames, and we shall consider two others here. The first is a natural counterpart to the ordered counting scheme, namely, counting the number of DFT-FUNTFs considered as unordered sets of vectors. The techniques developed for this method will then be expanded to our main goal, which is to count all inequivalent harmonic frames of prime order, where two harmonic frames shall be considered equivalent if one is the unitary transformation of the other. As we shall see, this amounts to counting the number of inequivalent DFT-FUNTFs.\\

There has been some interest in harmonic frames in the literature, see   \cite{bolcskei:guf,waldron:tfts}. In particular, \cite{waldron:ocahfonvicd} presents a computer program for generating all equivalence classes of harmonic frames for a given $N$ and $d$, where there is a limit on the size of either due to computational considerations. From this program, the authors conjecture that there are $\mathcal{O}(N^{d-1})$ inequivalent harmonic frames. The content of this paper is to not only validate this conjecture for the case when $N$ is a prime number, but in fact give an exact formula for the number of harmonic frames in this case. Furthermore, we examine the structure of prime order harmonic frames via their symmetry group.\\

An outline of this paper is as follows: the remainder of section \ref{sec: intro} reviews some algebraic theory and examines the problem of counting unordered DFT-FUNTFs. Section \ref{sec:the number of harmonic frames of prime order} defines harmonic frames and presents the main result of this paper. In section \ref{sec:harmonic frames and orbits} we define an equivalence relation that is equivalent to (\ref{eqn:equiv2}) and then use this to develop a correspondence between inequivalent harmonic frames and the orbits of a particular set. Section \ref{sec:the number of orbits of A} counts the number of orbits of this particular set, thus giving a formula for the number of inequivalent harmonic frames. The structure of the symmetry group is handled in section \ref{sec:the symmetry group}, and section \ref{sec:closing remarks} contains a few concluding remarks. \\

\subsection{Algebra Review}

Denote the additive group of integers mod $N$ by $\mathbb{Z}_N$, and set
\begin{equation*}
\mathbb{Z}_N^d := \underbrace{\mathbb{Z}_N \times \cdots \times \mathbb{Z}_N}_{d \text{ times}}.
\end{equation*}
Furthermore, let $\mathbb{Z}_N^{\times}$ denote the group of units of $\mathbb{Z}_N$, which, when $N$ is prime, is simply the set $\{1,\ldots,N\}$ endowed with multiplication mod $N$. Finally, for $k \in \mathbb{N}$, let $S_k$ denote the group of permutations of $k$ elements. We will also need the following definitions and proposition:

\begin{definition}
A {\it group action} of a group $G$ on a set $S$ is a map $\pi$,
\begin{eqnarray*}
\pi :G\times S & \rightarrow & S\\
(g,s) & \mapsto & g \cdot s,
\end{eqnarray*}
satisfying the following properties:
\begin{enumerate}
\renewcommand{\labelenumi}{\arabic{enumi})}
\item
$g_1 \cdot (g_2 \cdot s) = (g_1g_2) \cdot s \enspace \forall \enspace g_1,g_2 \in G, \enspace s \in S$,
\item
$1 \cdot s = s \enspace \forall \enspace s \in S$.
\end{enumerate}
\end{definition}
\begin{definition}
Let $S$ be some set and let $G$ be a group. Furthermore, let $\pi :G
\times S \rightarrow S$ be a group action. For each $s \in S$ the
{\it stabilizer} of $s$ in $G$ is the subgroup of $G$ that fixes the
element $s$:
\begin{equation*}
G_s := \{g \in G : g \cdot s = s\}.
\end{equation*}
\end{definition}
\begin{proposition}\label{prop:orbit}
Let $G$ be a group acting on the nonempty set $S$. The relation on $S$
defined by:
\begin{equation*}
s_1 \sim s_2 \iff  s_1 = g \cdot s_2 \text{ for some } g \in G
\end{equation*}
is an equivalence relation. For each $s \in S$, the number of elements
in the equivalence class containing $s$ is $|G:G_s|$, the index of the
stabilizer of $s$.
\end{proposition}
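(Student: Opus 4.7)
The plan is to dispatch the two assertions separately: first verify the three equivalence relation axioms using the defining properties of a group action, then construct an explicit bijection between the equivalence class of $s$ and the set of left cosets of $G_s$ in $G$.

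For the equivalence relation part, reflexivity follows from axiom (2) of a group action, since $s = 1 \cdot s$. For symmetry, I would suppose $s_1 = g \cdot s_2$ and apply $g^{-1}$ on the left: using axiom (1), $g^{-1} \cdot s_1 = g^{-1} \cdot (g \cdot s_2) = (g^{-1}g) \cdot s_2 = 1 \cdot s_2 = s_2$, so $s_2 \sim s_1$. For transitivity, if $s_1 = g \cdot s_2$ and $s_2 = h \cdot s_3$, then axiom (1) gives $s_1 = g \cdot (h \cdot s_3) = (gh) \cdot s_3$, so $s_1 \sim s_3$. These are all one-line consequences of the axioms.

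For the cardinality statement, let $[s]$ denote the equivalence class (orbit) of $s$ and define a map
\begin{equation*}
\phi : G/G_s \longrightarrow [s], \qquad \phi(gG_s) = g \cdot s.
\end{equation*}
The core of the proof is verifying that $\phi$ is a well-defined bijection. Well-definedness and injectivity are handled simultaneously by the chain of equivalences
\begin{equation*}
gG_s = g'G_s \iff g^{-1}g' \in G_s \iff (g^{-1}g') \cdot s = s \iff g' \cdot s = g \cdot s,
\end{equation*}
where the last step uses axiom (1) to move $g$ across the action. Surjectivity is immediate from the definition of $[s]$: every element of the orbit is by construction of the form $g \cdot s$ for some $g \in G$. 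Hence $|[s]| = |G/G_s| = |G:G_s|$.

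The only genuinely substantive step is the biconditional chain establishing that $\phi$ is well-defined and injective; everything else is bookkeeping against the two group action axioms. Since the proposition is the classical orbit-stabilizer theorem together with the standard orbit equivalence, no further machinery beyond the definitions already given is required.
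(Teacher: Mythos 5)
Your proof is correct and complete: the verification of the three equivalence-relation axioms from the two group-action axioms is exactly right, and the bijection $gG_s \mapsto g\cdot s$ with the biconditional chain establishing well-definedness and injectivity is the standard orbit-stabilizer argument. The paper itself states this proposition without proof, treating it as a quoted textbook fact from standard algebra references, so there is no authorial proof to compare against; your argument is the canonical one and fills the gap faithfully.
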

Note, when $G$ is a finite group,
\begin{equation*}
|G:G_s| = \frac{|G|}{|G_s|}.
\end{equation*}
\begin{definition}
Let $G$ be a group acting on the nonempty set $S$. The equivalence class $\mathcal{O}_s := \{g\cdot s : g \in G\}$ is called the {\it orbit} of $G$ containing $s$.
\end{definition}
As such, the orbits of a group action partition the set $S$. We are now ready to count the number of prime order DFT-FUNTFs, considered as unordered sets.  The basic structure of the argument in subsection \ref{subsec:ordered} will be used when we count all harmonic frames of prime order, albeit with added complexity.

\subsection{The Number of Unordered DFT-FUNTFs}\label{subsec:ordered}

It is often the case that we would like to consider a frame as a set, where the order of elements does not matter.  Given two ordered DFT-FUNTFs $\Phi = (\phi_0,\ldots ,\phi_{N-1})$ and $\Psi = (\psi_0,\ldots ,\psi_{N-1})$, we define the following equivalence relation:
\begin{equation}\label{eqn:equiv1}
\Phi \sim_1 \Psi \iff \exists \thinspace \sigma \in S_N \enspace
s.t. \enspace \phi_m = \psi_{\sigma (m)}, \quad \forall \enspace m = 0,\ldots,N-1.
\end{equation}
(\ref{eqn:equiv1}) merely formalizes our consideration of frames as sets. An equivalence class of (\ref{eqn:equiv1}) will be denoted in the usual way, that is $\Phi = \{\phi_0,\ldots,\phi_{N-1}\}$. In this subsection, we count the number of DFT-FUNTFs of prime order under (\ref{eqn:equiv1}). First, however, we must change our perspective on the problem.
\begin{remark}
For the rest of the paper we will only consider unordered DFT-FUNTFs, and as such from now on $\Phi$ will denote $\{\phi_0,\ldots,\phi_{N-1}\}$.
\end{remark}

\subsubsection{DFT-FUNTFs and Orbits}\label{subsec:dftfuntfs and orbits}

First notice that every DFT-FUNTF contains the vector $\phi_0 =
\frac{1}{\sqrt{d}}(1,\ldots,1)\in \mathbb{C}^d$, and so when comparing
two such frames we need not consider this vector. Thus we will
only compare sets of the form
\begin{equation*}
\Phi' = \Phi - \{\phi_0\}.
\end{equation*}
Define the set $\tilde{\mathbb{Z}}_N^d$ as
\begin{equation*}
\tilde{\mathbb{Z}}_N^d := \{n = (n_1,\ldots, n_d) \in \mathbb{Z}_N^d : n_i \neq n_j, \enspace \forall \enspace i \neq j\}.
\end{equation*}
There is a one-to-one correspondence between the vectors $\phi_m$, $m \neq 0$, and the elements of $\tilde{\mathbb{Z}}_N^d$. Considering $\mathbb{Z}_N^{\times}$ as a group and $\tilde{\mathbb{Z}}_N^d$ as a set, we define the group action $\pi_1$ as:
\begin{eqnarray*}
\pi_1 :\mathbb{Z}_N^{\times} \times \tilde{\mathbb{Z}}_N^d & \rightarrow & \tilde{\mathbb{Z}}_N^d\\
(m,n) & \mapsto &  m \cdot n := (mn_1,\ldots,mn_d).
\end{eqnarray*}
The orbits of $\pi_1$ are then the sets
\begin{equation*}
\mathcal{O}_n = \{m \cdot n = (mn_1,\ldots,mn_d) :  m \in\mathbb{Z}_N^{\times}\}, \quad n \in \tilde{\mathbb{Z}}_N^d.
\end{equation*}
\begin{remark}
For clarity of exposition we shall sometimes use $\Phi_n$ to denote the DFT-FUNTF $\Phi$ and $\phi_{m,n}$ its corresponding elements, where the subscript $n$ emphasizes the generators $n = (n_1,\ldots,n_d)$. 
\end{remark}
The following proposition relates the equivalence classes of (\ref{eqn:equiv1}) and the orbits of $\pi_1$.
\begin{proposition}\label{prop:funtf-orbit1}
There is a one-to-one correspondence between the equivalence classes
of (\ref{eqn:equiv1}) and the orbits of $\pi_1$, i.e. the sets
$\Phi_n$ and $\mathcal{O}_n$ can be identified. We denote this
identification as:
\begin{equation*}
\Phi_n = \{\phi_0,\ldots,\phi_{N-1}\} \longleftrightarrow \mathcal{O}_n.
\end{equation*}
\end{proposition}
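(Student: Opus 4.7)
The plan is to exhibit the identification $\Phi_n \longleftrightarrow \mathcal{O}_n$ explicitly by reading off, from each vector $\phi \in \Phi_n \setminus \{\phi_0\}$, its intrinsic tuple of exponents modulo $N$; this tuple coincides with $m \cdot n$ when $\phi = \phi_{m,n}$, so the collection of such tuples is precisely the orbit $\mathcal{O}_n$. Because $\sim_1$ is just set equality of frames, the correspondence will then be injective.

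First I would observe that $\phi_0 = \frac{1}{\sqrt d}(1, \ldots, 1)$ lies in every DFT-FUNTF, so $\Phi_n \sim_1 \Phi_{n'}$ reduces to $\Phi'_n = \Phi'_{n'}$ as unordered sets. Since $N$ is prime, $\{1, \ldots, N-1\} = \mathbb{Z}_N^{\times}$, and I would define $\iota_n : \Phi'_n \to \tilde{\mathbb{Z}}_N^d$ by $\iota_n(\phi_{m,n}) = m \cdot n$. Its image is $\mathcal{O}_n$ by construction; for injectivity, $\phi_{m_1,n} = \phi_{m_2,n}$ forces $(m_1 - m_2) n_j \equiv 0 \pmod N$ for each $j$, and since $n \in \tilde{\mathbb{Z}}_N^d$ has at most one zero coordinate while $N$ is prime, this yields $m_1 \equiv m_2 \pmod N$. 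Hence $\iota_n$ is a bijection, and in particular $|\Phi'_n| = |\mathcal{O}_n| = N - 1$.

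The crux is that $\iota_n$ depends only on the ambient vector $\phi \in \mathbb{C}^d$, not on the labelling $n$: from the $k$-th coordinate $\frac{1}{\sqrt d} e^{2\pi i a_k/N}$ of any vector in $\Phi'_n$, the exponent $a_k \in \mathbb{Z}_N$ is recovered unambiguously. Therefore the set equality $\Phi'_n = \Phi'_{n'}$ transports directly to the orbit equality $\mathcal{O}_n = \mathcal{O}_{n'}$, which is the nontrivial direction. The converse, together with surjectivity of the correspondence $\Phi_n \leftrightarrow \mathcal{O}_n$, is immediate since every $n \in \tilde{\mathbb{Z}}_N^d$ generates a DFT-FUNTF and matching tuples in an orbit produce matching vectors via $\iota_{n'}^{-1}$. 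The main (and essentially only) obstacle is confirming this intrinsic character of $\iota_n$ against its apparent dependence on $n$; after that, the proof is bookkeeping.
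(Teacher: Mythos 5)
Your proof is correct and takes essentially the same route as the paper's: both arguments come down to the observation that the exponent tuple of a frame vector is recoverable from its entries modulo $N$, so set equality of the frames (with $\phi_0$ removed) forces the corresponding orbits of $\pi_1$ to coincide. You simply run the bijection in the opposite direction (vectors to tuples rather than orbits to frames) and make explicit the intrinsic-recovery step that the paper leaves implicit.
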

\begin{proof}
As noted above, we have:
\begin{equation*}
\Phi \longleftrightarrow \Phi' = \Phi - \{\phi_0\}.
\end{equation*}
Define a function $F$ that maps orbits of $\tilde{\mathbb{Z}}_N^d$ to sets of the form $\Phi'$ as follows:
\begin{equation*}
F(\mathcal{O}_n) = \{\phi_{m,n}\}_{m=1}^N.
\end{equation*}
We must show that $F$ is both one-to-one and onto, however it is clear that $F$ is surjective. Considering then the former, suppose $F(\mathcal{O}_n) = F(\mathcal{O}_{n'})$. This would imply that $\{\phi_{m,n}\}_{m=1}^N = \{\phi_{m',n'}\}_{m'=1}^N$. But then for some $m$ and some $m'$, we would have $(mn_1,\ldots,mn_d) = (m'n'_1,\ldots,m'n'_d)$, i.e. $\mathcal{O}_n \cap \mathcal{O}_{n'} \neq \emptyset$, and so in fact $\mathcal{O}_n = \mathcal{O}_{n'}$.
\end{proof}
\begin{remark}
Given the content of proposition \ref{prop:funtf-orbit1}, we now replace the problem of counting the equivalence classes of (\ref{eqn:equiv1}) with the problem of counting the orbits of $\pi_1$.
\end{remark}

\subsubsection{The Number of Orbits of $\pi_1$}

By proposition \ref{prop:orbit} we see that the orbits of a group action partition the set into disjoint equivalence classes. In particular, the orbits $\mathcal{O}_n$ partition the set $\tilde{\mathbb{Z}}_N^d$. Furthermore, the size of each $\mathcal{O}_n$ is given by $|\mathcal{O}_n| = |\mathbb{Z}_N^{\times}:(\mathbb{Z}_N^{\times})_n|$. Using these facts, we prove the following proposition.
\begin{proposition}\label{thm:pi1 orbits}
Let $N$ be a prime number and $d \leq N$. Then the number of orbits of $\pi_1$ is:
\begin{enumerate}
\renewcommand{\labelenumi}{\arabic{enumi})}
\item
$2$, $\quad \text{if } d = 1$ or $d = N = 2$.
\item
$N(N-2) \cdots (N-d+1)$, $\quad \text{if } d \geq 2$, $N > 2$.
\end{enumerate}
\end{proposition}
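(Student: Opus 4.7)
The plan is to invoke Proposition~\ref{prop:orbit}: since the orbits partition $\tilde{\mathbb{Z}}_N^d$ and the orbit through $n$ has size $|\mathbb{Z}_N^{\times} : (\mathbb{Z}_N^{\times})_n|$, determining the stabilizers lets me count the orbits by dividing the ambient total. The ambient set has cardinality $|\tilde{\mathbb{Z}}_N^d| = N(N-1)(N-2)\cdots(N-d+1)$, the number of ordered $d$-tuples of distinct residues mod $N$.

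The two small cases of part 1) dispatch by direct inspection. When $d = 1$, $\tilde{\mathbb{Z}}_N^1 = \mathbb{Z}_N$, and multiplication by units fixes $0$ and acts transitively on the nonzero residues, producing exactly two orbits. When $d = N = 2$, $\tilde{\mathbb{Z}}_2^2 = \{(0,1),(1,0)\}$ is acted on by the trivial group $\mathbb{Z}_2^{\times} = \{1\}$, giving two singleton orbits.

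For part 2), the crux is to show that $(\mathbb{Z}_N^{\times})_n = \{1\}$ for every $n$ whenever $d \geq 2$ and $N > 2$. Given $n = (n_1, \ldots, n_d) \in \tilde{\mathbb{Z}}_N^d$, the distinctness condition forces at most one coordinate to be $0$, so some $n_i \neq 0$; since $N$ is prime, this $n_i$ lies in $\mathbb{Z}_N^{\times}$. If $m \in \mathbb{Z}_N^{\times}$ satisfies $m \cdot n = n$, then $mn_i = n_i$ in $\mathbb{Z}_N^{\times}$ forces $m = 1$. Hence every stabilizer is trivial, and Proposition~\ref{prop:orbit} yields $|\mathcal{O}_n| = N - 1$ for every $n$. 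Dividing gives
\[
\frac{N(N-1)(N-2)\cdots(N-d+1)}{N-1} = N(N-2)(N-3)\cdots(N-d+1),
\]
as claimed.

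There is no serious obstacle here: the proof reduces to a stabilizer calculation followed by a single division. The only subtle point is recognizing that the combinatorial hypothesis ``the coordinates of $n$ are pairwise distinct'' translates, via the primality of $N$, into ``at least one coordinate is a unit,'' which is exactly what kills the stabilizer. This is also what makes the edge case $d = N = 2$ qualitatively different only through the degeneracy $|\mathbb{Z}_2^{\times}| = 1$, rather than through any failure of the stabilizer argument.
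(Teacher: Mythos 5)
Your proposal is correct and follows essentially the same route as the paper: compute the stabilizers (trivial for $d\geq 2$, and for $n\neq 0$ when $d=1$), conclude every orbit has size $N-1$ except $\mathcal{O}_0$ in the $d=1$ case, and divide $|\tilde{\mathbb{Z}}_N^d|$ by the common orbit size. The only cosmetic difference is that you treat $d=N=2$ by direct inspection while the paper folds it into the general division, and you spell out the stabilizer computation (some coordinate is a nonzero residue, hence a unit) that the paper merely asserts.
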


\begin{proof}
We first consider the case $d = 1$. For $n = 0$ we have
$(\mathbb{Z}_N^{\times})_0 = \mathbb{Z}_N^{\times}$, and so $|\mathcal{O}_0| = (N -
1)/(N - 1) = 1$. For $n \neq 0$ we see $(\mathbb{Z}_N^{\times})_n =
\{1\}$, and thus $|\mathcal{O}_n| = N - 1$. Since
$|\tilde{\mathbb{Z}}_N^1| = N$, there are only two orbits.\\

Now take $2 \leq d \leq N$. For each $n \in \tilde{\mathbb{Z}}_N^d$ we
have $(\mathbb{Z}_N^{\times})_n = \{1\}$, and thus $|\mathcal{O}_n| =
N - 1$. Therefore the number of orbits is given by $x$, where
\begin{eqnarray*}
|\tilde{\mathbb{Z}}_N^d| & = & x|\mathcal{O}_n|,\\
N(N-1) \cdots (N-d+1) & = & x(N-1).
\end{eqnarray*}
For $N = 2$ and $d = 2$, we see $x = 2$. For $N > 2$ we have $x =
N(N-2) \cdots (N-d+1)$.
\end{proof}
As an addendum to theorem \ref{thm:pi1 orbits}, we note that one of the orbits in the $d=1$ case corresponds to a degenerate DFT-FUNTF. Namely, the orbit $\mathcal{O}_0$ corresponds to the DFT-FUNTF consisting of the single element $\{1\}$.\\

\section{The Number of Harmonic Frames of Prime Order}\label{sec:the number of harmonic frames of prime order}

Using a similar correspondence between harmonic frames and orbits, we count all harmonic frames of prime order up to unitary transformations. We first give some background information.

\subsection{Harmonic Frames}\label{subsec:harmonic frames}

Let $\mathbb{C}^{\times}$ denote the group of units of $\mathbb{C}$, that is the set $\mathbb{C}\backslash\{0\}$ endowed with multiplication.
\begin{definition}
A {\it character} of a group $G$ is a group homomorphism $\xi:G \rightarrow \mathbb{C}^{\times}$ that satisfies
\begin{equation*}
\xi(g_1g_2) = \xi(g_1)\xi(g_2), \quad \forall \enspace g_1,g_2 \in G.
\end{equation*}
\end{definition}
If $G$ is a finite group, then $\xi(g)$ is a $|G|$-th root of unity. A finite abelian group has exactly $|G|$ characters, and considered as vectors in $\mathbb{C}^{|G|}$, it can be shown that they form an orthogonal basis for $\mathbb{C}^{|G|}$. The square matrix with these vectors as rows is referred to as the {\it character table} of $G$. In particular, when $|G| = N$ is prime, then $G \cong \mathbb{Z}_N$, and the character table of $G$ is the un-normalized DFT matrix, $D_N$.
\begin{definition}
Let $G$ be a finite abelian group of order $N$ with characters $(\xi_j)_{j=1}^N$, $J \subseteq \{1,\ldots,N\}$, and $U:\mathbb{C}^{|J|} \rightarrow \mathbb{C}^{|J|}$ unitary. Then the frame for $\mathbb{C}^{|J|}$ given by
\begin{equation*}
\Phi = U\{(\xi_j(g))_{j\in J} : g\in G\}
\end{equation*}
is called a {\it harmonic frame}.
\end{definition}
\begin{remark}\label{rem:harmonicDFT}
When $N$ is prime and $U = I$, the identity matrix, $\Phi$ is a DFT-FUNTF.
\end{remark}
We will also need the following definition and theorem later on.
\begin{definition}
Let $\mathcal{U}(\mathbb{C}^d)$ denote the group of unitary transformations on $\mathbb{C}^d$. The {\it symmetry group} of a FUNTF $\Phi$ for $\mathbb{C}^d$ is the group:
\begin{equation*}
\mathrm{Sym}(\Phi) := \{U \in \mathcal{U}(\mathbb{C}^d) : U\Phi = \Phi\}.
\end{equation*}
For clarity, we emphasize that $U\Phi = \Phi$ is a set equality.
\end{definition}
\begin{theorem}[Vale and Waldron \cite{waldron:ocahfonvicd}]\label{thm:vw05}
A FUNTF $\Phi$ of $N$ vectors for $\mathbb{C}^d$ is harmonic if and only if it is generated by an abelian group $G \subset \mathrm{Sym}(\Phi)$ of order $N$, i.e., $\Phi = G\phi, \enspace \forall \enspace \phi \in \Phi$. 
\end{theorem}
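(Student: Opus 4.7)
My plan is to prove the biconditional by handling the two directions separately, with the forward direction exhibiting an explicit abelian subgroup built from the characters, and the reverse direction using simultaneous diagonalization of the commuting unitaries in $G$.

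For the forward direction, suppose $\Phi = U\{(\xi_j(g))_{j\in J} : g \in G_0\}$ is a harmonic frame for some abelian group $G_0$ of order $N$. For each $h \in G_0$ I would define the diagonal unitary $D_h := \mathrm{diag}(\xi_j(h))_{j\in J}$ on $\mathbb{C}^{|J|}$ and then set $T_h := U D_h U^{-1}$. Writing $\phi_g := U(\xi_j(g))_{j\in J}$, the character identity $\xi_j(gh) = \xi_j(g)\xi_j(h)$ gives immediately $T_h \phi_g = \phi_{gh}$, so $T_h$ permutes $\Phi$ and hence $T_h \in \mathrm{Sym}(\Phi)$. The assignment $h \mapsto T_h$ is a homomorphism (because $h \mapsto D_h$ is, by coordinatewise multiplication), the image $G := \{T_h : h \in G_0\}$ is abelian (conjugate of a set of commuting diagonals), and for every $\phi_g \in \Phi$ we have $G\phi_g = \{\phi_{gh} : h \in G_0\} = \Phi$. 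The remaining task is to check that $|G| = N$, which amounts to saying that $D_h = I$ forces $h = 1_{G_0}$; this is where the frame/spanning hypothesis enters, as $\xi_j(h) = 1$ for all $j \in J$ would collapse the orbit and prevent the vectors $(\xi_j(\cdot))_{j\in J}$ from spanning $\mathbb{C}^{|J|}$.

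For the reverse direction, suppose $G \subseteq \mathrm{Sym}(\Phi)$ is abelian of order $N$ and $\Phi = G\phi$ for every $\phi \in \Phi$. Because $G$ is a commuting family of unitaries on $\mathbb{C}^d$, they are simultaneously diagonalizable: there exists a unitary $V$ with $V^* g V = \mathrm{diag}(\lambda_j(g))_{j=1}^d$ for each $g \in G$. Each coordinate map $\lambda_j : G \to \mathbb{C}^\times$ is then a character of $G$ (the diagonal representation is multiplicative). Fixing any $\phi \in \Phi$ and writing $V^*\phi = (c_j)_{j=1}^d$, every frame element has the form $V(\lambda_j(g) c_j)_j$. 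I would then absorb the phases $c_j/|c_j|$ into $V$ to get a new unitary $V'$, so that $\Phi = \{V'(r_j \lambda_j(g))_j : g \in G\}$ with $r_j := |c_j|$. Unit norm gives $\sum_j r_j^2 = 1$, and tightness together with character orthogonality $\sum_{g \in G}\lambda_j(g)\overline{\lambda_k(g)} = N\,\delta_{\lambda_j,\lambda_k}$ forces the frame operator to be a positive multiple of the identity, which pins down $r_j \equiv 1/\sqrt{d}$. Reindexing so that the characters $\lambda_1,\dots,\lambda_d$ appear at positions $J$ in a full enumeration of the $N$ characters of $G$ then realizes $\Phi$ as a harmonic frame $V'\{(\lambda_j(g))_{j\in J}/\sqrt{d}\}$.

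The main obstacle I anticipate is the case in the reverse direction where the characters $\lambda_1,\dots,\lambda_d$ are not all distinct: the off-diagonal entries of the frame operator in the $V'$-basis then involve $\sum_g \lambda_j(g)\overline{\lambda_k(g)} = N$ rather than zero, and the clean equality $r_j \equiv 1/\sqrt{d}$ no longer drops out of a single diagonality argument. I would handle this by passing to the isotypic decomposition of $\mathbb{C}^d$ under $G$: within each isotypic block (a subspace on which every $g \in G$ acts by the same scalar $\xi(g)$), tightness forces the projection of $\phi$ onto that block to have a specific norm, and a further unitary rotation within each block allows one to replace a repeated character by the same character paired with a genuinely unitary change of basis, restoring the harmonic-frame form. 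The forward direction's $|G| = N$ verification is the other spot requiring care, but it is a short spanning argument compared with the isotypic bookkeeping on the other side.
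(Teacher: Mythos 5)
First, a point of reference: the paper does not prove this statement at all --- it is quoted from Vale and Waldron with a citation --- so there is no in-paper proof to compare against. Your outline is essentially the standard argument (explicit diagonal unitaries conjugated by $U$ in one direction, simultaneous diagonalization of the commuting unitaries in the other) and its overall structure is sound, but two steps need repair.

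In the forward direction, your justification that $h \mapsto T_h$ is injective is not correct as stated: $\xi_j(h) = 1$ for all $j \in J$ does not prevent the vectors $(\xi_j(g))_{j\in J}$ from spanning. For instance, take $G_0 = \mathbb{Z}_4$ and let $J$ index the two characters that factor through $\mathbb{Z}_2$; the resulting set of vectors is $\{(1,1),(1,-1)\}$, which spans $\mathbb{C}^2$, yet $D_h = I$ for the element $h$ of order $2$. What actually forces injectivity is the hypothesis that $\Phi$ consists of $N$ vectors: $D_h = I$ is equivalent to $\phi_h = \phi_{1_{G_0}}$, and a harmonic frame with $N$ distinct vectors makes $g \mapsto \phi_g$ a bijection, so $h = 1_{G_0}$. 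Replace the spanning appeal with this counting argument and the direction closes.

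In the reverse direction, the obstacle you anticipate does not occur, and the isotypic-decomposition repair you sketch would not help if it did --- a repeated character cannot be ``restored'' to harmonic form, since $J$ is a subset of $\{1,\ldots,N\}$ and therefore indexes pairwise distinct characters. The resolution is already contained in your own computation: the diagonal of the frame operator in the $V'$-basis reads $N r_j^2 = N/d$, so every $r_j$ is nonzero; consequently any coincidence $\lambda_j = \lambda_k$ with $j \neq k$ produces the off-diagonal entry $N r_j r_k \neq 0$, contradicting tightness. Thus unit norm and tightness together force the $d$ characters to be pairwise distinct, and your argument finishes without any isotypic bookkeeping.
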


\subsection{The Number of Inequivalent Harmonic Frames}\label{sec: num harmonic frames}
 
Two harmonic frames $\Phi = \{\phi_0,\ldots ,\phi_{N-1}\} \subset \mathbb{C}^d$ and $\Psi = \{\psi_0,\ldots ,\psi_{N-1}\} \subset \mathbb{C}^d$ are said to be {\it equivalent} if the following equivalence relation holds:
\begin{equation}\label{eqn:equiv2}
\Phi \sim_2 \Psi \iff \exists \enspace U\in \mathcal{U}(\mathbb{C}^d) \text{ s.t. } \Phi = U\Psi.
\end{equation}
Once again, we emphasize that the right hand side of (\ref{eqn:equiv2}) is set equality. (\ref{eqn:equiv2}) is a standard form of equivalence in much of the literature when dealing with frames. Recently, \cite{waldron:ocahfonvicd} conjectured that the number of {\it inequivalent} harmonic frames is $O(N^{d-1})$. We prove this conjecture for $N$ a prime number as a corollary to theorem \ref{thm:main2}, which gives an exact formula for the number of harmonic frames. The proof of theorem \ref{thm:main2} is handled in section \ref{sec:the number of orbits of A}, with much preliminary work accomplished in section \ref{sec:harmonic frames and orbits}. \\

For a fixed $N$ and $d$, we backwards recursively define the set
\begin{equation*}
\{\alpha_c \in \mathbb{N} \cup \{0\} : c \in \mathbb{N}, c \mid N-1, \text{ and } c \mid d \text{ or } c \mid d-1\}.
\end{equation*}
If $c \mid N-1$, $c \mid d$, and $c > 1$, then
\renewcommand{\theequation}{\thesection.\arabic{equation} $d$}
\begin{equation}\label{eqn:alphac d}
\alpha_c := \frac{(N-1-c)(N-1-2c)\cdots(N-1-(\frac{d}{c}-1)c)}{c^{\frac{d}{c}-1}(d/c)!} - \frac{c}{N-1}\sum_{\substack{c < b < N \\ c|b, \thinspace b|d}} \left(\frac{N-1}{b}\right)\alpha_b,
\end{equation}
where we have used the notation (\ref{eqn:alphac d}) to emphasize
its dependence on the condition $c \mid d$. If $c \mid N-1$, $c \mid d-1$, and $c > 1$, then
\renewcommand{\theequation}{\thesection.\arabic{equation} $d-1$}
\addtocounter{equation}{-1}
\begin{equation}\label{eqn:alphac d-1}
\alpha_c := \frac{(N-1-c)(N-1-2c)\cdots(N-1-(\frac{d-1}{c}-1)c)}{c^{\frac{d-1}{c}-1}((d-1)/c)!} - \frac{c}{N-1}\sum_{\substack{c < b < N \\ c|b, \thinspace b|d-1}} \left(\frac{N-1}{b}\right)\alpha_b.
\end{equation}
Finally, $\alpha_1$ is defined as:
\renewcommand{\theequation}{\thesection.\arabic{equation}}
\begin{equation}\label{eqn:alpha1_frame}
\alpha_1 := \frac{1}{N-1}{N \choose d} - \sum_{\substack{c|d \\ c > 1}} \frac{\alpha_c}{c} - \sum_{\substack{c|d-1 \\ c > 1}} \frac{\alpha_c}{c}.
\end{equation}
\begin{theorem}\label{thm:main2}
Let $N$ be a prime number and let $1 < d < N$. Define the set
\begin{equation*}
\{\alpha_c \in \mathbb{N} \cup \{0\} : c \in \mathbb{N}, c \mid N-1, \text{ and } c \mid d \text{ or } c \mid d-1\},
\end{equation*}
as in equations (\ref{eqn:alphac d}), (\ref{eqn:alphac d-1}), and (\ref{eqn:alpha1_frame}). The total number of harmonic frames for $\mathbb{C}^d$ with $N$ elements is then given by:
\begin{equation}\label{eqn: alpha sum}
\alpha_1 + \sum_{\substack{c|d \\ c > 1}}\alpha_c + \sum_{\substack{c|d-1 \\ c > 1}}\alpha_c.
\end{equation}
\end{theorem}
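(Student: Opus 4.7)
The plan is to replace the enumeration of inequivalent harmonic frames by an orbit-counting problem for $\mathbb{Z}_N^\times$ acting on subsets, and then to stratify those orbits by stabilizer order. I will take for granted the correspondence promised in section~\ref{sec:harmonic frames and orbits}, which identifies the $\sim_2$-equivalence classes of harmonic frames with the orbits of a natural $\mathbb{Z}_N^\times$-action on subsets of $\mathbb{Z}_N$ of size $d$ (the generator sets). Because scalar multiplication by $\mathbb{Z}_N^\times$ fixes $0$, and because whether the generator set contains $0$ is a unitary invariant (e.g.\ $\lVert\sum_m \phi_m\rVert$ distinguishes the two cases), the action splits into two independent pieces: one on the $d$-element subsets of $\mathbb{Z}_N^\times$ and one on the $(d-1)$-element subsets of $\mathbb{Z}_N^\times$ (obtained by deleting the fixed $0$). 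The total orbit count is the sum of the two.

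For each case, let $k\in\{d, d-1\}$ denote the relevant subset size. Since $N$ is prime, $\mathbb{Z}_N^\times$ is cyclic of order $N-1$, so its subgroups are in bijection with divisors $c\mid N-1$, with $H_c$ the unique subgroup of order $c$. The stabilizer of $S\subseteq \mathbb{Z}_N^\times$ under multiplication is a subgroup, and $S$ decomposes into cosets of that stabilizer, forcing the stabilizer order to divide $|S|=k$. For each $c\mid N-1$ with $c\mid k$, the subsets of size $k$ whose stabilizer contains $H_c$ are exactly the unions of $k/c$ of the $(N-1)/c$ cosets of $H_c$, of which there are $\binom{(N-1)/c}{k/c}$. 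Subtracting the analogous counts for proper multiples $b$ of $c$ yields the number of subsets with stabilizer equal to $H_c$, and dividing by the orbit size $(N-1)/c$ produces the number $\alpha_c^{(k)}$ of orbits with stabilizer of order exactly $c$. A routine algebraic simplification rewrites $(c/(N-1))\binom{(N-1)/c}{k/c}$ in the falling-factorial form appearing in the leading terms of (\ref{eqn:alphac d}) and (\ref{eqn:alphac d-1}); the subtracted sums there are precisely the larger-divisor corrections of the recursion.

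For $c>1$ only one of $c\mid d$ and $c\mid d-1$ can hold (since $\gcd(d,d-1)=1$), so $\alpha_c$ is unambiguously defined. At $c=1$, however, both cases contribute; merging them via $\alpha_1 := \alpha_1^{(d)}+\alpha_1^{(d-1)}$ and using $\binom{N-1}{d}+\binom{N-1}{d-1}=\binom{N}{d}$ produces (\ref{eqn:alpha1_frame}). Summing $\alpha_c$ over all relevant divisors then gives (\ref{eqn: alpha sum}) and the theorem. The main obstacle, as I see it, is not the combinatorics but the bookkeeping for the two parallel recursions that share a single $\alpha_1$; once this is sorted, what remains is essentially Möbius inversion on the divisor lattice of $N-1$, restricted to divisors of $k$, applied to the cyclic group action above.
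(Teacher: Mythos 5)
Your proposal is correct and follows essentially the same route as the paper: reduce to counting orbits of $\mathbb{Z}_N^{\times}$ acting on $d$-element subsets of $\mathbb{Z}_N$ (the paper's $\mathbb{A}_N^d$ and proposition~\ref{prop:1-1equiv2}), split according to whether $0$ lies in the generator set, stratify by stabilizer order $c$, count the fixed sets of $H_c$ as unions of cosets, and recover the exact-stabilizer counts by the backwards recursion before dividing by the orbit size $(N-1)/c$. Your coset-union argument is a tidier packaging of the chain computation in theorem~\ref{thm: orbit structure}, and your identification $\binom{(N-1)/c}{k/c}$ together with $\binom{N-1}{d}+\binom{N-1}{d-1}=\binom{N}{d}$ reproduces exactly the leading terms of (\ref{eqn:alphac d}), (\ref{eqn:alphac d-1}), and (\ref{eqn:alpha1_frame}), so the differences are presentational rather than substantive.
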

More concisely, we have the following corollary:
\begin{corollary}
Let $N$ be any prime number and fix $d$ such that $1 < d < N$. Then the number of inequivalent harmonic frames for $\mathbb{C}^d$ with $N$ elements is $O(N^{d-1})$.
\end{corollary}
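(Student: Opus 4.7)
The plan is to avoid the precise formula of Theorem \ref{thm:main2} altogether and derive the bound $O(N^{d-1})$ directly from Proposition \ref{thm:pi1 orbits}, by comparing the set-equality relation $\sim_1$ from (\ref{eqn:equiv1}) with the unitary-equivalence relation $\sim_2$ from (\ref{eqn:equiv2}).

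First I would observe that, for $N$ prime, every harmonic frame for $\mathbb{C}^d$ with $N$ vectors is unitarily equivalent to some DFT-FUNTF. By definition a harmonic frame has the form $\Phi = U\Phi_0$ for some unitary $U$ and some character-built frame $\Phi_0$, and Remark \ref{rem:harmonicDFT} tells us that when $N$ is prime $\Phi_0$ is itself a DFT-FUNTF. Consequently the number of $\sim_2$-inequivalent harmonic frames equals the number of $\sim_2$-inequivalent DFT-FUNTFs.

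Next I would note that $\sim_2$ is coarser than $\sim_1$: if $\Phi = \Psi$ as sets then $\Phi = I\Psi$ with $I$ unitary, so $\sim_1$-equivalent frames are a fortiori $\sim_2$-equivalent. Hence the number of $\sim_2$-classes of DFT-FUNTFs is bounded above by the number of $\sim_1$-classes, which by Proposition \ref{prop:funtf-orbit1} equals the number of orbits of $\pi_1$. Proposition \ref{thm:pi1 orbits} then gives the explicit upper bound $N(N-2)(N-3)\cdots(N-d+1) \leq N^{d-1}$, since this is a product of $d-1$ factors each at most $N$. Combining with the preceding paragraph proves the corollary.

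I do not foresee a real obstacle: the only substantive step is the (essentially tautological) identification of prime-order harmonic frames with unitary images of DFT-FUNTFs via Remark \ref{rem:harmonicDFT}, after which the bound is immediate from the already-established orbit count. An alternative route would be to argue directly from Theorem \ref{thm:main2}: the $\alpha_1$ term contributes $\binom{N}{d}/(N-1) = O(N^{d-1})$, and a straightforward descending induction on $c$ through the divisors of $d$ and $d-1$ shows each $\alpha_c$ with $c \geq 2$ is $O(N^{d/c-1})$ or $O(N^{(d-1)/c-1})$, hence $O(N^{d/2-1})$ at worst. Summing over the $N$-independent set of relevant $c$ recovers the same $O(N^{d-1})$ bound, but at considerably greater expense, which is why I would prefer the orbit-counting argument.
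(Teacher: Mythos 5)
Your argument is correct, but it is a genuinely different route from the paper's. The paper obtains the corollary as a byproduct of the exact enumeration: it inspects the recursive formulas (\ref{eqn:alphac d}) and (\ref{eqn:alphac d-1}) to see that each $\alpha_c$ with $c>1$ is $O(N^{d/c-1})$, and then reads off $\alpha_1 = O(N^{d-1})$ from (\ref{eqn:alpha1_frame}) --- essentially your ``alternative route.'' What you propose instead bypasses Theorem \ref{thm:main2} (and with it all of Sections 3 and 4): you reduce to DFT-FUNTFs via Remark \ref{rem:harmonicDFT} exactly as the paper does at the start of Section 3.1, observe that the relation (\ref{eqn:equiv1}) refines (\ref{eqn:equiv2}) so that the number of $\sim_2$-classes is at most the number of $\sim_1$-classes, and then invoke Propositions \ref{prop:funtf-orbit1} and \ref{thm:pi1 orbits} to get the explicit bound $N(N-2)\cdots(N-d+1)\leq N^{d-1}$ (a product of $d-1$ factors, and the hypotheses $1<d<N$ put you in case 2 of Proposition \ref{thm:pi1 orbits}). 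This is a clean and much cheaper proof of the literal statement. What it gives up is any information beyond an upper bound: your estimate overcounts by roughly a factor of $d!$ and says nothing about whether the count is actually of order $N^{d-1}$, whereas the paper's route shows the total is $\frac{1}{N-1}\binom{N}{d}+O(N^{d/2-1})\sim N^{d-1}/d!$, i.e., genuinely $\Theta(N^{d-1})$. If the Vale--Waldron conjecture is read as asserting the growth rate rather than merely an upper bound, your argument would need to be supplemented by a lower bound (which, to be fair, also follows cheaply: orbits of $\pi_2$ have size at most $N-1$, so there are at least $\binom{N}{d}/(N-1)$ of them); as a proof of the corollary as literally stated, it is complete.
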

\begin{proof}
Using equations (\ref{eqn:alphac d}) and (\ref{eqn:alphac d-1}), we
see that $\alpha_c = O(N^s)$, where $c > 1$ and $s \leq \frac{d}{c} - 1 < d - 1$. Therefore, by (\ref{eqn:alpha1_frame}), we see that $\alpha_1 = O(N^{d-1})$, and the corollary follows.
\end{proof}
In the above theorems, the case $d=1$ is omitted, however, it is not hard to see that there are two inequivalent harmonic frames in this case; in fact, there is only one inequivalent harmonic frame for $d=1$ with $N$ distinct vectors.

\section{Harmonic Frames and Orbits}\label{sec:harmonic frames and orbits}

In this section we develop a one-to-one correspondence between
inequivalent harmonic frames and the orbits of a particular set, not
unlike the ideas presented in subsection \ref{subsec:ordered}. First,
however, we come up with an equivalent condition to
(\ref{eqn:equiv2}).\\

We will assume $N$ is prime for the remainder of this paper.

\subsection{A New Equivalence Relation}

When $N$ is prime, every harmonic frame is of the form $U\Phi$, where $U\in\mathcal{U}(\mathbb{C}^d)$ and $\Phi$ is a DFT-FUNTF (see remark \ref{rem:harmonicDFT}). Therefore, finding the number of inequivalent harmonic frames amounts to finding the number of inequivalent DFT-FUNTFs. Toward that end, we simplify (\ref{eqn:equiv2}) to the following:
\begin{theorem}\label{thm:equiv}
If $N$ is prime and $\Phi = \{\phi_0,\ldots ,\phi_{N-1}\}$ and $\Psi = \{\psi_0,\ldots ,\psi_{N-1}\}$ are DFT-FUNTFs, then
\begin{eqnarray}
&& \exists \enspace \sigma_1 \in S_N, \enspace \sigma_2 \in S_d \text{ such that} \nonumber \\
\exists \enspace U\in \mathcal{U}(\mathbb{C}^d) \text{ s.t. } \Phi = U\Psi & \iff & \qquad \phi_m(k) = \psi_{\sigma_1(m)}(\sigma_2(k))\label{eqn:equiv2simple}\\
&& \forall \enspace m = 0,\ldots,N-1, \enspace k = 1,\ldots,d,\nonumber
\end{eqnarray}
where $\phi_m(k)$ denotes the $k^{\text{th}}$ element of the vector $\phi_m$.
\end{theorem}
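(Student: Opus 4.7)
The reverse direction is straightforward: given $\sigma_1 \in S_N$ and $\sigma_2 \in S_d$ satisfying the displayed identity, define $U$ on $\mathbb{C}^d$ by $(Uv)(k) := v(\sigma_2(k))$; the corresponding matrix has entries $U_{kj} = \delta_{j,\sigma_2(k)}$, so $U$ is a permutation matrix, hence unitary. Then $(U\psi_{\sigma_1(m)})(k) = \psi_{\sigma_1(m)}(\sigma_2(k)) = \phi_m(k)$, so $\Phi = U\Psi$ as sets.

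For the forward direction, suppose $U \in \mathcal{U}(\mathbb{C}^d)$ satisfies $U\Psi = \Phi$ as sets. Let $(n_1,\ldots,n_d)$ and $(n'_1,\ldots,n'_d)$ be the generators of $\Phi$ and $\Psi$, and introduce the diagonal ``shift'' unitaries $D_\Phi := \mathrm{diag}(e^{2\pi i n_k/N})_{k=1}^d$ and $D_\Psi := \mathrm{diag}(e^{2\pi i n'_j/N})_{j=1}^d$, which satisfy $D_\Phi \phi_m = \phi_{m+1}$ and $D_\Psi \psi_m = \psi_{m+1}$. Since the generators are distinct and $N$ is prime, $\langle D_\Phi\rangle \subset \mathrm{Sym}(\Phi)$ and $\langle D_\Psi\rangle \subset \mathrm{Sym}(\Psi)$ are cyclic subgroups of order $N$ (providing the abelian generating subgroups supplied by Theorem \ref{thm:vw05}). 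The linchpin is a Sylow-theoretic reduction: $\mathrm{Sym}(\Phi)$ acts faithfully on the $N$-element set $\Phi$ (as $\Phi$ spans $\mathbb{C}^d$), so $\mathrm{Sym}(\Phi) \hookrightarrow S_N$, and primality of $N$ forces the $N$-part of $|\mathrm{Sym}(\Phi)|$ to be exactly $N$. Thus $\langle D_\Phi\rangle$ is a Sylow $N$-subgroup; the conjugate $U\langle D_\Psi\rangle U^{-1}$ is another, so Sylow's theorem produces $V \in \mathrm{Sym}(\Phi)$ with $VU\langle D_\Psi\rangle U^{-1}V^{-1} = \langle D_\Phi\rangle$. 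Replacing $U$ by $VU$ (which still satisfies $VU\Psi = \Phi$), I may assume $UD_\Psi U^{-1} = D_\Phi^a$ for some $a \in \mathbb{Z}_N^{\times}$.

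Once this intertwining is secured, the rest is computation. Because $D_\Psi$ and $D_\Phi^a$ are both diagonal on the standard basis with distinct eigenvalues $e^{2\pi i n'_j/N}$ and $e^{2\pi i a n_k/N}$, the intertwining forces $U$ to be monomial: $Ue_j = c_j e_{k(j)}$ for some bijection $k:\{1,\ldots,d\}\to\{1,\ldots,d\}$ and unit scalars $c_j$, with the eigenvalue matching $n'_j \equiv a n_{k(j)} \pmod N$. Writing $U\psi_m = \phi_{\sigma(m)}$ coordinate-by-coordinate and equating phases then yields the constraint that $\sigma$ is affine, $\sigma(m) = a^{-1}m + b$ for some $b \in \mathbb{Z}_N$. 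Setting $\sigma_1(m) := a^{-1}m$ and $\sigma_2 := k^{-1}$, a direct substitution in (\ref{eqn:dft-funtf}) verifies $\phi_m(k) = \psi_{\sigma_1(m)}(\sigma_2(k))$ for all $m, k$. The substantive obstacle is thus the Sylow reduction, which combines the faithful action of $\mathrm{Sym}(\Phi)$ on the frame, the primality of $N$, and Sylow conjugacy; the eigenvalue and phase computations that follow it are routine.
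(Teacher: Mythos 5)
Your proof is correct, but it takes a genuinely different route from the paper's. The paper first uses the Vale--Waldron theorem (theorem \ref{thm:vw05}) to normalize $U$ so that $\phi_0 = U\psi_0$, then compares $\langle\phi_m,\phi_0\rangle = \langle\psi_{\sigma(m)},\psi_0\rangle$ and invokes the fact that $1+z+\cdots+z^{N-1}$ is the minimal polynomial of $e^{2\pi i/N}$ for prime $N$ to force the exponent multisets $\{mn_k\}_k$ and $\{\sigma(m)l_k\}_k$ to coincide, which produces $\sigma_2$ and, at $m=1$, the multiplier $\sigma(1)$. You instead work entirely inside the symmetry group: faithfulness of the action on the $N$ spanning vectors embeds $\mathrm{Sym}(\Phi)$ into $S_N$, primality gives $N^2\nmid N!$, so $\langle D_\Phi\rangle$ and $U\langle D_\Psi\rangle U^{-1}$ are Sylow $N$-subgroups and hence conjugate by some $V\in\mathrm{Sym}(\Phi)$; the resulting intertwining $UD_\Psi U^{-1}=D_\Phi^{a}$ together with the distinctness of the eigenvalues forces $U$ to be monomial with $n'_j\equiv a n_{k(j)}\pmod N$, and $\sigma_2=k^{-1}$, $\sigma_1(m)=a^{-1}m$ then verify the identity directly. (Your intermediate claim that $\sigma$ is affine is not needed for this last step, and under your convention $U\psi_m=\phi_{\sigma(m)}$ the relation $U\psi_{m+1}=D_\Phi^aU\psi_m$ gives $\sigma(m)=am+b$ rather than $a^{-1}m+b$; nothing downstream depends on this.) Your Sylow step is the analogue of, and is stronger than, the paper's normalization via theorem \ref{thm:vw05}: it aligns the whole cyclic shift group rather than a single vector. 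Both arguments use primality essentially --- yours through $N^2\nmid N!$ and the order of the shift group, the paper's through irreducibility of the cyclotomic polynomial --- and neither extends to composite $N$. What yours buys is that it avoids the minimal-polynomial computation and makes transparent the structural fact underlying section \ref{sec:harmonic frames and orbits}, namely that two DFT-FUNTFs are unitarily equivalent exactly when their generator sets agree up to a unit multiple of $\mathbb{Z}_N^{\times}$ and a permutation; the paper's argument is more elementary in its group theory and needs only the Gram matrix.
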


\begin{proof}
It is clear that if the right hand side of (\ref{eqn:equiv2simple}) holds, then the left hand side must hold as well. Assume then that $\Phi = U\Psi$, and note that
\begin{equation}\label{eqn:equiv2perm1}
\Phi = U\Psi \iff \phi_m = U\psi_{\sigma(m)}, \quad \forall \enspace m
= 0,\ldots,N-1,
\end{equation}
for some permutation $\sigma \in S_N$. Without loss of generality, we may assume that $\sigma(0) = 0$. Indeed, let $\Phi_M$ and $\Psi_M$ be $d \times N$ matrices whose $N$ columns are the vectors $\phi_0,\ldots,\phi_{N-1}$ and $\psi_0,\ldots,\psi_{N-1}$, respectively. Combining (\ref{eqn:equiv2}) and (\ref{eqn:equiv2perm1}), we then have:
\begin{equation}
\Phi \sim_2 \Psi \iff \Phi_M = U\Psi_M P_{\sigma},
\end{equation}
where $P_{\sigma}$ is the $N \times N$ permutation matrix of $\sigma$. By theorem \ref{thm:vw05} there exists a $W \in \mathrm{Sym}(\Psi)$ such that $W\psi_0 = \psi_{\sigma(0)}$. By definition, $W$ is a $d \times d$ matrix that permutes the columns of $\Psi_M$ by acting on the left. Therefore, there exists an $N \times N$ permutation matrix $P_W$ that permutes the columns of $\Psi_M$ in the exact same manner, yet acts on the right. In particular, $W\Psi_M = \Psi_MP_W$, and thus
\begin{equation*}
\Phi_M = UW\Psi_MP_W^{-1}P_{\sigma}.
\end{equation*}
Set $V := UW$ and $P := P_W^{-1}P_{\sigma}$. It is clear that $V$ is a unitary transformation and that $P$ is its associated permutation matrix. Furthermore, $\phi_0 = V\psi_0$, and so we can assume from the start that $\phi_0 = U\psi_0$, i.e., that $\sigma(0) = 0$.\\

Now let $n_1,\ldots,n_d$ denote the column choices of $\Phi$, and consider the following:
\begin{equation}\label{eqn:phi ip}
\langle \phi_m,\phi_0 \rangle = \sum_{k=1}^d e^{2\pi imn_k/N}.
\end{equation}
Letting $l_1,\ldots,l_d$ denote the column choices of $\Psi$, we also have:
\begin{equation}\label{eqn:psi ip}
\langle \phi_m,\phi_0 \rangle = \langle U\psi_{\sigma(m)},U\psi_0 \rangle = \langle \psi_{\sigma(m)},\psi_0 \rangle = \sum_{k=1}^d e^{2\pi i\sigma(m)l_k/N}.
\end{equation}
Define $p_{\phi},p_{\psi} \in \mathbb{Z}[z]/\langle z^N \rangle$ as follows:
\begin{equation}\label{eqn: poly def}
p_{\phi}(z) := \sum_{k=1}^d z^{mn_k} \quad \text{and} \quad p_{\psi}(z) := \sum_{k=1}^d z^{\sigma(m)l_k}.
\end{equation}
By equations (\ref{eqn:phi ip}) and (\ref{eqn:psi ip}), we see that $p_{\phi}(z) = p_{\psi}(z)$ when $z = e^{2\pi i/N}$. In other words, $z = e^{2\pi i/N}$ is a root of the polynomial $p(z) := p_{\phi}(z) - p_{\psi}(z)$. However, since $p \in \mathbb{Z}[z]/\langle z^N\rangle$, and the minimum polynomial of $z = e^{2\pi i/N}$ is $q(z) := \sum_{k=0}^{N-1} z^k$, $p$ must either be an integer multiple of $q$ or the zero polynomial. It is clear, though, that only the latter option is feasible, thus giving
\begin{equation}\label{eqn: poly equal}
p_{\phi}(z) = p_{\psi}(z).
\end{equation}
Combining equations (\ref{eqn: poly def}) and (\ref{eqn: poly equal}), we see there exists a $\sigma_2 \in S_d$ such that
\begin{equation}\label{eqn:perm2}
mn_k = \sigma(m)l_{\sigma_2(k)}, \quad \forall \enspace k = 1,\ldots,d.
\end{equation}
Note that $\sigma_2$ is dependent on the choice of $m$. Taking $m = 1$ in (\ref{eqn:perm2}), one has $n_k = \sigma(1)l_{\sigma_2(k)}$. Letting $\sigma_1(m) := \sigma(1)m$, we have:
\begin{equation}
\phi_m = (e^{2\pi imn_k/N})_{k=1}^d = (e^{2\pi i\sigma_1(m)l_{\sigma_2(k)}})_{k=1}^d = \psi_{\sigma_1(m)}(\sigma_2(k)).
\end{equation}
\end{proof}

\subsection{Inequivalent DFT-FUNTFs and Orbits}

Similar to subsection \ref{subsec:dftfuntfs and orbits}, we now develop a one-to-one correspondence between inequivalent DFT-FUNTFs and the orbits of a particular set. As a matter of notation, we shall denote equivalence classes of (\ref{eqn:equiv2}) by $[\Phi]$, where $\Phi = \{\phi_0,\ldots,\phi_{N-1}\}$ is a DFT-FUNTF representative. By theorem \ref{thm:equiv}, the equivalence classes of (\ref{eqn:equiv2}) are identical to the equivalence classes of the right hand side of (\ref{eqn:equiv2simple}). We now turn our attention to the set with which we will identify the equivalence classes $[\Phi]$.\\

Consider the following equivalence relation on the set $\tilde{\mathbb{Z}}_N^d$,
\begin{equation}\label{eqn:A element equiv}
(n_1,\ldots,n_d) \sim (n_1',\ldots,n_d') \iff \exists \enspace \sigma \in S_d \enspace s.t. \enspace (n_1,\ldots,n_d) = (n_{\sigma (1)}',\ldots,n_{\sigma (d)}').
\end{equation}
Denote an equivalence class of (\ref{eqn:A element equiv}) by the
representative $[n] = [n_1,\ldots,n_d]$, and define $\mathbb{A}_N^d$
as the set of all equivalence classes, i.e.
\begin{equation*}
\mathbb{A}_N^d := \tilde{\mathbb{Z}}_N^d/\sim.
\end{equation*}
It is easy to see $|\mathbb{A}_N^d| = {N \choose d}$. Considering
$\mathbb{Z}_N^{\times}$ as a group and $\mathbb{A}_N^d$ as a set, we
define the group action $\pi_2$,
\begin{equation}\label{eqn:pi2}
\begin{array}{rcl}
\pi_2 :\mathbb{Z}_N^{\times} \times \mathbb{A}_N^d & \rightarrow & \mathbb{A}_N^d\\
(m,[n]) & \mapsto &  m \cdot [n] := [mn_1,\ldots,mn_d].
\end{array}
\end{equation}
The orbits of $\pi_2$ are the sets $\mathcal{O}_{[n]} = \{m \cdot [n] =
[mn_1,\ldots,mn_d] :  m \in
\mathbb{Z}_N^{\times}\}$. The following proposition relates the
equivalence classes of (\ref{eqn:equiv2}) and the orbits of $\pi_2$.
\begin{proposition}\label{prop:1-1equiv2}
There is a one-to-one correspondence between the equivalences classes
of $(\ref{eqn:equiv2})$ and the orbits of $\pi_2$, i.e.
\begin{equation*}
[\Phi_n] \longleftrightarrow \mathcal{O}_{[n]}.
\end{equation*}
\end{proposition}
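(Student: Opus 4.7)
The plan is to mimic the strategy of Proposition \ref{prop:funtf-orbit1}: define the map $F(\mathcal{O}_{[n]}) := [\Phi_n]$ from orbits of $\pi_2$ to $\sim_2$-equivalence classes of DFT-FUNTFs and verify that it is well-defined, surjective, and injective. Surjectivity is immediate, since every DFT-FUNTF is $\Phi_n$ for some $n \in \tilde{\mathbb{Z}}_N^d$, whose class $[n] \in \mathbb{A}_N^d$ has an orbit that maps to $[\Phi_n]$. Throughout, the decisive tool is Theorem \ref{thm:equiv}, which replaces the analytic condition ``$\Phi = U\Psi$'' by the combinatorial one $\phi_m(k) = \psi_{\sigma_1(m)}(\sigma_2(k))$.

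For well-definedness, I would take $[n'] = m \cdot [n]$ for some $m \in \mathbb{Z}_N^{\times}$, unfold the definition of $\mathbb{A}_N^d$ to produce $\sigma \in S_d$ with $n'_k = m n_{\sigma(k)}$, and then verify that $\sigma_1(j) := mj \bmod N$ (a permutation of $\mathbb{Z}_N$ because $m$ is a unit) together with $\sigma_2 := \sigma$ satisfies the combinatorial condition on the right-hand side of $(\ref{eqn:equiv2simple})$. This is a one-line exponent check in the defining roots of unity; Theorem \ref{thm:equiv} then yields $\Phi_{n'} \sim_2 \Phi_n$, so $F$ is well-defined.

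Injectivity is where the real content sits. Suppose $[\Phi_n] = [\Phi_{n'}]$. Theorem \ref{thm:equiv} supplies $\sigma_1 \in S_N$ and $\sigma_2 \in S_d$ with $\phi_{j,n}(k) = \phi_{\sigma_1(j),n'}(\sigma_2(k))$ for all $j,k$, and comparing exponents modulo $N$ yields the congruence $j n_k \equiv \sigma_1(j) n'_{\sigma_2(k)} \pmod{N}$. Specializing to $j = 1$ and setting $m := \sigma_1(1)$ gives $n_k \equiv m n'_{\sigma_2(k)} \pmod{N}$ for every $k$. Were $m \equiv 0$, every $n_k$ would equal $0$, contradicting the distinctness built into $\tilde{\mathbb{Z}}_N^d$ (using $d \geq 2$); hence $m \in \mathbb{Z}_N^{\times}$, so $[n] = m \cdot [n']$ in $\mathbb{A}_N^d$ and the orbits coincide.

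The main obstacle I foresee is essentially administrative: one must confirm that the permutation $\sigma_1$ coming out of Theorem \ref{thm:equiv} really does pin down an element $m$ of $\mathbb{Z}_N^{\times}$ rather than just of $\mathbb{Z}_N$. The distinctness constraint in $\tilde{\mathbb{Z}}_N^d$ is exactly what excludes $m = 0$, so no extra normalization of $\sigma_1$ (e.g., $\sigma_1(0) = 0$) is needed beyond what Theorem \ref{thm:equiv} already provides. Everything else is a clean dictionary translation between the multiplicative-plus-permutation action of $\pi_2$ on generators and the two-permutation description of $\sim_2$-equivalence of DFT-FUNTFs.
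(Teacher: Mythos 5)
Your proposal is correct and follows essentially the same route as the paper: the same bijection between orbits of $\pi_2$ and $\sim_2$-classes, established via Theorem \ref{thm:equiv}, with the only cosmetic difference being that you define the map from orbits to classes rather than the reverse (so your ``well-defined'' and ``injective'' steps are the paper's ``injective'' and ``well-defined'' steps, respectively). Your specialization to $j=1$ to extract $m = \sigma_1(1)$, and the observation that distinctness in $\tilde{\mathbb{Z}}_N^d$ rules out $m=0$, matches the substance of the paper's argument.
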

\begin{proof}
Define the function $F$ as follows:
\begin{equation*}
F([\Phi_n]) = \mathcal{O}_{[n]} = \{[mn_1,\ldots,mn_d] : m \in \mathbb{Z}_N^{\times}\}.
\end{equation*}
We must show that $F$ is well defined, one-to-one, and onto. Surjectivity is clear, so we focus on the first two. To show $F$ is well defined, suppose that $[\Phi_n] = [\Psi_{n'}]$. We want to show $F([\Phi_n]) = F([\Psi_{n'}])$, i.e. $\mathcal{O}_{[n]} = \mathcal{O}_{[n']}$. We have:
\begin{eqnarray*}
[\Phi_n] = [\Psi_{n'}] & \iff & \phi_m(k) = \psi_{\sigma_1(m)}(\sigma_2(k)) \enspace \forall \enspace k = 1,\ldots,d, \enspace \forall \enspace m = 0,\ldots,N-1\\
& \iff & \{\phi_0(k)_{k=1}^d,\ldots,\phi_{N-1}(k)_{k=1}^d\} = \{\psi_0(\sigma_2(k))_{k=1}^d,\ldots,\psi_{N-1}(\sigma_2(k))_{k=1}^d\}\\
& \iff & \{\phi_1(k)_{k=1}^d,\ldots,\phi_{N-1}(k)_{k=1}^d\} = \{\psi_1(\sigma_2(k))_{k=1}^d,\ldots,\psi_{N-1}(\sigma_2(k))_{k=1}^d\}\\
& \iff & \{(mn_1,\ldots,mn_d) : m \in \mathbb{Z}_N^{\times}\} = \{(mn_{\sigma_2(1)}',\ldots,mn_{\sigma_2(d)}') : m \in \mathbb{Z}_N^{\times}\}\\
& \iff & \{[mn_1,\ldots,mn_d] : m \in \mathbb{Z}_N^{\times}\} = \{[mn_1',\ldots,mn_d'] : m \in \mathbb{Z}_N^{\times}\}\\
& \iff & \mathcal{O}_{[n]} = \mathcal{O}_{[n']},
\end{eqnarray*}
where the first equivalence is due to theorem \ref{thm:equiv}, and the third equivalence is because $\phi_0 = \psi_0 = \frac{1}{\sqrt{d}}(1,\ldots,1)$.\\

To prove injectivity, we assume $\mathcal{O}_{[n]} = \mathcal{O}_{[n']}$. According to this assumption, there must exist an $m_0' \in \mathbb{Z}_N^{\times}$ such that $[n_1,\ldots,n_d] = [m_0'n_1',\ldots,m_0'n_d']$. Therefore we have:
\begin{eqnarray*}
\mathcal{O}_{[n]} = \mathcal{O}_{[n']} & \iff & [n_1,\ldots,n_d] = [m_0'n_1',\ldots,m_0'n_d']\\
& \iff & (n_1,\ldots,n_d) = (m_0'n_{\sigma_2(1)}',\ldots,m_0'n_{\sigma_2(d)}')\\
& \iff & (mn_1,\ldots,mn_d) = (mm_0'n_{\sigma_2(1)}',\ldots,mm_0'n_{\sigma_2(d)}'), \enspace \forall \enspace m \in \mathbb{Z}_N^{\times}\\
& \iff & \{(mn_1,\ldots,mn_d) : m \in \mathbb{Z}_N^{\times}\} = \{(mn_{\sigma_2(1)}',\ldots,mn_{\sigma_2(d)}') : m \in \mathbb{Z}_N^{\times}\}\\
& \iff & \{\phi_1(k)_{k=1}^d,\ldots,\phi_{N-1}(k)_{k=1}^d\} = \{\psi_1(\sigma_2(k))_{k=1}^d,\ldots,\psi_{N-1}(\sigma_2(k))_{k=1}^d\}\\
& \iff & \{\phi_0(k)_{k=1}^d,\ldots,\phi_{N-1}(k)_{k=1}^d\} = \{\psi_0(\sigma_2(k))_{k=1}^d,\ldots,\psi_{N-1}(\sigma_2(k))_{k=1}^d\}\\
& \iff & \phi_m(k) = \psi_{\sigma_1(m)}(\sigma_2(k)), \enspace \forall \enspace k = 1,\ldots,d, \enspace m = 0,\ldots,N-1\\
& \iff & [\Phi_n] = [\Psi_{n'}],
\end{eqnarray*}
where the fourth equivalence uses the fact that $\{mm_0' : m \in \mathbb{Z}_N^{\times}\} = \{m : m \in \mathbb{Z}_N^{\times}\}$.
\end{proof}

To conclude this section, we note that when $d = N$, we see $|\mathbb{A}_N^d| = 1$, and so there can be only one orbit. Thus there is only one harmonic frame in this case.

\section{The Number of Orbits of $\mathbb{A}_N^d$}\label{sec:the number of orbits of A}

We begin by counting the number of orbits of $\mathbb{A}_N^d$ under the group action $\pi_2$ for the cases $d=2$ and $d=3$.  We then generalize these results for all $1 < d < N$.

\subsection{Some Examples: d = 2 and d = 3}\label{sec:some examples d=2 and d=3}

\begin{proposition}\label{prop:d=2}
Let $N$ be an odd prime number and let $d = 2$. Then there are $(N+1)/2$ orbits of $\mathbb{A}_N^2$. Therefore, there are $(N+1)/2$ inequivalent harmonic frames for $\mathbb{C}^2$.
\end{proposition}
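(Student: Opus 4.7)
The plan is to apply the orbit--stabilizer theorem (Proposition \ref{prop:orbit}) to the action $\pi_2$ of $\mathbb{Z}_N^{\times}$ on $\mathbb{A}_N^2$, whose total size is $|\mathbb{A}_N^2|=\binom{N}{2}=N(N-1)/2$. Since $|\mathbb{Z}_N^{\times}|=N-1$, every orbit size divides $N-1$, and the task reduces to determining the stabilizer $(\mathbb{Z}_N^{\times})_{[n_1,n_2]}$ as a function of the unordered pair $[n_1,n_2]$.

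First I would compute the stabilizers. The condition $m\cdot[n_1,n_2]=[n_1,n_2]$ means $\{mn_1,mn_2\}=\{n_1,n_2\}$ as unordered pairs, giving two cases. Either $m$ fixes each coordinate, which (since at least one of $n_1,n_2$ is a unit of the field $\mathbb{Z}_N$) forces $m=1$; or $m$ swaps the coordinates, in which case $m^2 n_1 = n_1$ and so $m^2=1$. Because $\mathbb{Z}_N$ is a field with $N$ odd, this yields $m=-1$, and the swap condition becomes $n_2=-n_1$.

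Next I would partition $\mathbb{A}_N^2$ into three types according to this stabilizer analysis and count orbits in each:
\begin{enumerate}
\renewcommand{\labelenumi}{\arabic{enumi})}
\item Pairs of the form $[0,n_2]$ with $n_2\in\mathbb{Z}_N^{\times}$: here no swap is possible since $0\neq mn_2$, so the stabilizer is trivial and the orbit size is $N-1$. There are exactly $N-1$ such pairs, all lying in a single orbit (as $m\cdot[0,n_2]=[0,mn_2]$ ranges over all of them).
\item Antipodal pairs $[n,-n]$ with $n\in\mathbb{Z}_N^{\times}$ (well-defined and disjoint from 0 since $N$ is odd): the stabilizer is $\{\pm 1\}$, so the orbit size is $(N-1)/2$. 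There are $(N-1)/2$ such pairs, forming a single orbit.
\item All remaining pairs (neither entry zero, and not antipodal): trivial stabilizer, orbit size $N-1$. Counting by subtraction,
\begin{equation*}
\binom{N}{2}-(N-1)-\frac{N-1}{2}=\frac{(N-1)(N-3)}{2},
\end{equation*}
which accounts for $(N-3)/2$ orbits.
\end{enumerate}

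Summing the three contributions yields $1+1+(N-3)/2=(N+1)/2$ orbits, and Proposition \ref{prop:1-1equiv2} translates this count of orbits into the same count of inequivalent harmonic frames for $\mathbb{C}^2$. The only real subtlety is the separate treatment of the zero coordinate (where the generic ``$m$ fixes each entry $\Rightarrow m=1$'' argument needs the observation that the other entry is still a unit) and verifying that $[n,-n]$ is a genuine two-element unordered pair, which uses the hypothesis that $N$ is odd; both are straightforward once isolated.
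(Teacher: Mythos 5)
Your proof is correct and follows essentially the same route as the paper: use orbit--stabilizer for the action of $\mathbb{Z}_N^{\times}$ on $\mathbb{A}_N^2$, show the only nontrivial stabilizer is $\{\pm 1\}$ arising from the antipodal pairs $[n,-n]$ (which form a single orbit of size $(N-1)/2$), and then count the remaining orbits of size $N-1$ by division. Your explicit separate treatment of the pairs $[0,n_2]$ is a small point of added care that the paper glosses over, but it does not change the argument.
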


\begin{proof}
Let $[n] \in \mathbb{A}_N^2$. If $(\mathbb{Z}_N^{\times})_{[n]} = \{1\}$,
then $|\mathcal{O}_{[n]}| = N - 1$. Therefore, if we can find all $[n] \in
\mathbb{A}_N^2$ with non-trivial stabilizer and their corresponding
orbits, we will be able to solve for the total number of
orbits. Assume that $m \cdot [n_1,n_2] = [mn_1,mn_2] = [n_1,n_2]$
for some $m \neq 1$. This implies that
\begin{eqnarray*}
mn_1 & \equiv & n_2 \text{ mod } N,\\
mn_2 & \equiv & n_1 \text{ mod } N.
\end{eqnarray*}
Combining the above equations yields
\begin{displaymath}
\begin{array}{rrcl}
& m^2n_1 & \equiv & n_1 \text{ mod } N\\
\Rightarrow & m & \equiv & \pm 1 \text{ mod } N.
\end{array}
\end{displaymath}

The only valid solution is $m \equiv -1 \text{ mod } N$, which implies $n_2 \equiv -n_1 \text{ mod } N$. Therefore all $[n] \in
\mathbb{A}_N^2$ of the form $[n] = [n_1,-n_1]$, $n_1 \neq 0$, have stabilizer
$\{1,-1\}$. Furthermore, since
\begin{displaymath}
\mathcal{O}_{[1,-1]} = \{m \cdot [1,-1] = [m,-m] : m \in \mathbb{Z}_N^{\times}\},
\end{displaymath}
we see that all such $[n]$ lie in the orbit
$\mathcal{O}_{[1,-1]}$. Finally, these are the only elements of
$\mathbb{A}_N^2$ with nontrivial stabilizer, and thus the number of
orbits of $\mathbb{A}_N^2$ is $x + 1$, where $x$ is the number of orbits of size $N-1$. Therefore,
\begin{eqnarray*}
|\mathbb{A}_N^2| & = & x(N - 1) + |\mathcal{O}_{(1,-1)}|,\\
{N \choose 2} & = & x(N - 1) + (N - 1)/2,\\
N(N - 1)/2 & = & x(N - 1) + (N - 1)/2.
\end{eqnarray*}

Solving for $x$ we get $x = (N - 1)/2$ and so $\mathbb{A}_N^2$ has $x
+ 1 = (N - 1)/2 + 1 = (N + 1)/2$ orbits.  
\end{proof}

\begin{proposition}\label{prop:d=3}
Let $N$ be a prime number, $N > 3$, and let $d = 3$:\\
\begin{enumerate}
\item If $N \equiv 1 \enspace \mathrm{mod} \enspace 3$, then there are $(N^2 - 2N + 7)/6$ orbits of $\mathbb{A}_N^3$.
\item If $N \equiv 2 \enspace \mathrm{mod} \enspace 3$, then there are $(N^2 - 2N + 3)/6$ orbits of $\mathbb{A}_N^3$.
\end{enumerate}
Therefore, if $N \equiv 1 \enspace \mathrm{mod} \enspace 3$, there are $(N^2 - 2N + 7)/6$ inequivalent harmonic frames for $\mathbb{C}^3$, and if $N \equiv 2 \enspace \mathrm{mod} \enspace 3$, there are $(N^2 - 2N + 3)/6$ inequivalent harmonic frames for $\mathbb{C}^3$.
\end{proposition}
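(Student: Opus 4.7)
The plan is to mirror the argument of Proposition \ref{prop:d=2}: identify all $[n] \in \mathbb{A}_N^3$ with nontrivial stabilizer under $\pi_2$, determine the small orbits they generate, and then extract the number $x$ of orbits of the generic size $N-1$ from the counting identity
\[
\binom{N}{3} \;=\; |\mathbb{A}_N^3| \;=\; x(N-1) + \sum(\text{small orbit sizes}).
\]
The total orbit count is then $x$ plus the number of small orbits, and the final claim about inequivalent harmonic frames follows immediately from Proposition \ref{prop:1-1equiv2}.

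The heart of the argument is a case analysis on a permutation $\sigma \in S_3$ realizing the stabilizer relation $m n_i \equiv n_{\sigma(i)} \pmod{N}$ for some $m \neq 1$. If $\sigma$ is the identity, then $m = 1$, a contradiction. If $\sigma$ is a transposition, say $(2\,3)$, then $m n_1 = n_1$ forces $n_1 = 0$, while $m n_2 = n_3$ and $m n_3 = n_2$ give $m^2 \equiv 1$ and hence $m = -1$ and $n_3 = -n_2$; since the three choices of transposition produce the same unordered triple, this yields a single orbit $\mathcal{O}_{[0,1,-1]}$ with stabilizer $\{1,-1\}$ and size $(N-1)/2$. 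If $\sigma$ is a $3$-cycle, iterating the relations gives $n_2 = m n_1$, $n_3 = m^2 n_1$, and $m^3 \equiv 1 \pmod{N}$; such an $m \neq 1$ exists precisely when $3 \mid N-1$, i.e., $N \equiv 1 \pmod{3}$, in which case a primitive cube root $\omega$ generates the additional orbit $\mathcal{O}_{[1,\omega,\omega^2]}$ with stabilizer $\{1,\omega,\omega^2\}$ and orbit size $(N-1)/3$.

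Before plugging in I would verify that these small orbits are disjoint and exhaust the special elements: a triple fixed by a transposition has a zero coordinate, whereas a triple fixed by a $3$-cycle cannot (otherwise $m n_1 = 0$ would force a second coordinate to vanish, contradicting distinctness), so the two stabilizer types never coexist. The counting identity becomes
\[
\binom{N}{3} \;=\; x(N-1) + \frac{N-1}{2} + \varepsilon \cdot \frac{N-1}{3},
\]
with $\varepsilon = 1$ when $N \equiv 1 \pmod{3}$ and $\varepsilon = 0$ otherwise; dividing by $N-1$ solves for $x$, and adding $1 + \varepsilon$ for the small orbits yields the stated totals $(N^2 - 2N + 7)/6$ and $(N^2 - 2N + 3)/6$. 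The main obstacle is ensuring the case analysis is exhaustive and nonredundant---recognizing that $\{1,-1\}$ and $\{1,\omega,\omega^2\}$ are the only possible nontrivial stabilizers and that they arise from geometrically incompatible triples, so no triple is double-counted.
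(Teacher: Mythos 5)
Your proposal is correct and follows essentially the same route as the paper's proof: identify the elements of $\mathbb{A}_N^3$ with nontrivial stabilizer, show they fall into the single orbit $\mathcal{O}_{[0,1,-1]}$ of size $(N-1)/2$ plus (when $3 \mid N-1$) the orbit $\mathcal{O}_{[1,\omega,\omega^2]}$ of size $(N-1)/3$, and solve the counting identity $\binom{N}{3} = x(N-1) + \text{(small orbits)}$ for the number $x$ of generic orbits. Your organization by the cycle type of the induced permutation $\sigma \in S_3$ is a slightly cleaner packaging of the paper's case split on whether a coordinate vanishes, but the substance is identical.
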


\begin{proof}
As in the proof of proposition \ref{prop:d=2}, we are looking for all $[n] \in \mathbb{A}_N^3$ with non-trivial stabilizer and their corresponding orbits. So again we suppose 
\begin{equation}\label{eqn:d=3stab}
m\cdot [n_1,n_2,n_3] = [mn_1,mn_2,mn_3] = [n_1,n_2,n_3],
\end{equation}

for some $m\neq 1$. We now consider two cases:\\

I: Suppose $n_1 = 0$. Then we want $m\cdot [0,n_2,n_3] = [0,mn_2,mn_3] = [0,n_2,n_3]$. But this is just the same situation as the $d=2$ case, and so the elements of $\mathbb{A}_N^3$ of this form with non-trivial stabilizer all lie in the following orbit:
\begin{equation*}
\begin{array}{rcl}
\mathcal{O}_{[0,1,-1]} & = & \{m\cdot [0,1,-1] = [0,m,-m] : m \in \mathbb{Z}_N^{\times} \},\\
|\mathcal{O}_{[0,1,-1]}| & = & (N-1)/2.
\end{array}
\end{equation*}

II: Suppose $n_k \neq 0$ for all $k = 1,2,3$. According to (\ref{eqn:d=3stab}), we have three options for the value of $mn_1$:
\begin{equation*}
mn_1 \equiv \left\{
\begin{array}{l}
n_1 \text{ mod } N, \\
n_2 \text{ mod } N, \\
n_3 \text{ mod } N.
\end{array}
\right.
\end{equation*}
If $mn_1 \equiv n_1 \text{ mod } N$, then $m = 1$, which is trivial and so we disregard this case. Since the order of elements does not matter in $\mathbb{A}_N^3$, there is no difference between $mn_1 \equiv n_2 \text{ mod } N$ and $mn_1 \equiv n_3 \text{ mod } N$, and so we choose the former. Moving on to the value of $mn_2$, we once again have the same three options. However, $mn_2 \equiv n_1 \text{ mod } N$, combined with $mn_1 \equiv n_2 \text{ mod } N$ would imply that $mn_3 \equiv n_3 \text{ mod } N$, thus resulting in $m = 1$. $mn_2 \equiv n_2 \text{ mod } N$ not only would imply $m = 1$, but since $mn_1 \equiv n_2 \text{ mod } N$, would also lead to a contradiction. Therefore $mn_2 \equiv n_3 \text{ mod } N$ must hold, which in turn forces $mn_3 \equiv n_1 \text{ mod } N$. Summarizing, we have
\begin{eqnarray}\label{eqn:d=3array}
mn_1 & \equiv & n_2 \text{ mod } N, \nonumber \\
mn_2 & \equiv & n_3 \text{ mod } N, \\
mn_3 & \equiv & n_1 \text{ mod } N. \nonumber
\end{eqnarray}
Proceeding in a similar fashion to the proof of proposition \ref{prop:d=2}, we see that (\ref{eqn:d=3array}) implies
\begin{equation}\label{eqn:d=3m}
m^3n_1 \equiv n_1 \text{ mod } N.
\end{equation}
We now find all $m \in \mathbb{Z}_N^{\times}$ that satisfy (\ref{eqn:d=3m}). Let $g$ be any primitive root mod $N$, i.e. $\langle g \rangle = \mathbb{Z}_N^{\times}$. Then all nontrivial solutions to (\ref{eqn:d=3m}) are of the form
\begin{equation}
m \equiv g^{(N-1)/3} \text{ mod } N \quad \text{ or } \quad m \equiv g^{2(N-1)/3} \text{ mod } N.
\end{equation}
We have two cases:\\

II.a: If $3$ does not divide $N-1$, i.e. $N \equiv 2 \text{ mod } 3$, then the only solution to (\ref{eqn:d=3m}) is $m = 1$.\\

II.b: If $3$ does divide $N-1$, i.e. $N \equiv 1 \text{ mod } 3$, then the solution set to (\ref{eqn:d=3m}) is:
\begin{equation}
\{1, g^{(N-1)/3}, g^{2(N-1)/3} : g \text{ is a primitive root mod } N \}.
\end{equation}
Therefore all elements in $\mathbb{A}_N^3$ of the form $[n_1,g^{(N-1)/3}n_1,g^{2(N-1)/3}n_1]$, $n_1 \neq 0$, have stabilizer $\{1,g^{(N-1)/3},g^{2(N-1)/3}\}$. Furthermore, all elements of this form lie in the following orbit:
\begin{equation*}
\mathcal{O}_{[1,g^{(N-1)/3},g^{2(N-1)/3}]} = \{[m,mg^{(N-1)/3},mg^{2(N-1)/3}] : m \in \mathbb{Z}_N^{\times} \},
\end{equation*}
where
\begin{equation*}
|\mathcal{O}_{[1,g^{(N-1)/3},g^{2(N-1)/3}]}| = (N-1)/3.
\end{equation*}
Indeed, since we have assumed that $n_1 \neq 0$, there are $N-1$ choices for $n_1$.  However, since the order of elements in the $3$-tuple does not matter, choosing $n_1$ is the same as choosing $g^{(N-1)/3}n_1$ or $g^{2(N-1)/3}n_1$. Therefore there are $(N-1)/3$ elements of this form, and they must all lie in the orbit $\mathcal{O}_{[1,g^{(N-1)/3},g^{2(N-1)/3}]}$. Using the same techniques as in proposition \ref{prop:d=2}, we may now count the number of orbits (recall that $x$ is the number of orbits of size $N-1$):
\begin{enumerate}
\item
If $N \equiv 1 \text{ mod } 3$, then there are $x + 2$ orbits:
\begin{equation*}
|\mathbb{A}_N^3| = x(N-1) + (N-1)/2 + (N-1)/3.
\end{equation*}
Solving for $x$ we get $x + 2 = (N^2 - 2N + 7)/6$.
\item
If $N \equiv 2 \text{ mod } 3$, then there are $x + 1$ orbits:
\begin{equation*}
|\mathbb{A}_N^3| = x(N-1) + (N-1)/2.
\end{equation*}
Solving for $x$ we get $x + 1 = (N^2 - 2N + 3)/6$. 
\end{enumerate}
\end{proof}

\subsection{The Structure of the Orbits of $\mathbb{A}_N^d$}

We now turn our attention to the more general setting, beginning with the following theorem which addresses the order of the orbits of $\mathbb{A}_N^d$ and the form of the elements in the orbits.

\begin{theorem}\label{thm: orbit structure}
Let $N$ be a prime number and let $1 < d < N$. If $\mathcal{O}$ is an orbit of $\mathbb{A}_N^d$ under the group action $\pi_2$, then there exists $c \in \mathbb{N}$ such that $c \mid d$ or $c \mid d-1$, and
\begin{equation}\label{eqn:orbitorder}
|\mathcal{O}| = (N-1)/c.
\end{equation}
Furthermore, let $g$ be a primitive root mod $N$ and set
\begin{equation}\label{eqn:akc}
n_k^c := [n_k,g^{(N-1)/c}n_k,\ldots,g^{(c-1)(N-1)/c}n_k], \quad n_k \neq 0.
\end{equation} 
If $[n] \in \mathcal{O}$, then $[n]$ can be written in the form
\renewcommand{\theequation}{\thesection.\arabic{equation} $c$}
\begin{equation}\label{eqn:aform}
[n] = \left\{
\begin{array}{ll}
[n_1^c,n_2^c,\ldots,n_{d/c}^c] & \text{if } c \mid d,\\
\left.[0,n_1^c,n_2^c,\ldots,n_{(d-1)/c}^c]\right. & \text{if } c \mid d-1.
\end{array}
\right.
\end{equation}
\end{theorem}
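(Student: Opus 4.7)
The plan is orbit--stabilizer plus the cyclic structure of $\mathbb{Z}_N^{\times}$. First, by Proposition \ref{prop:orbit}, any orbit satisfies $|\mathcal{O}_{[n]}| = (N-1)/|(\mathbb{Z}_N^{\times})_{[n]}|$, so setting $c := |(\mathbb{Z}_N^{\times})_{[n]}|$ immediately gives (\ref{eqn:orbitorder}) and forces $c \mid N-1$. What remains is to show $c \mid d$ or $c \mid d-1$ and to establish the explicit form (\ref{eqn:aform}).

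For the structural part I exploit that $\mathbb{Z}_N^{\times}$ is cyclic of order $N-1$, so it has a \emph{unique} subgroup of order $c$, namely $\langle g^{(N-1)/c}\rangle$. Hence $(\mathbb{Z}_N^{\times})_{[n]} = \langle m\rangle$ with $m := g^{(N-1)/c}$. Unpacking the definitions of $\pi_2$ in (\ref{eqn:pi2}) and of $\sim$ in (\ref{eqn:A element equiv}), the condition $m\cdot[n]=[n]$ says $(mn_1,\ldots,mn_d) = (n_{\sigma(1)},\ldots,n_{\sigma(d)})$ for some $\sigma \in S_d$; since the entries of $n$ are pairwise distinct, this is equivalent to multiplication-by-$m$ permuting the $d$-element set $S := \{n_1,\ldots,n_d\} \subset \mathbb{Z}_N$.

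I then analyze how $H := \langle m\rangle$ acts on $\mathbb{Z}_N$. The element $0$ is fixed; for any nonzero $n$, $m^k n \equiv n \pmod{N}$ forces $m^k \equiv 1 \pmod{N}$ (using primality of $N$), so the pointwise stabilizer is trivial and every $H$-orbit on $\mathbb{Z}_N^{\times}$ has exactly $c$ elements, necessarily of the form $\{n_k, g^{(N-1)/c}n_k,\ldots,g^{(c-1)(N-1)/c}n_k\} = n_k^c$ from (\ref{eqn:akc}). The $H$-invariance of $S$ therefore forces $S$ to be a disjoint union of such $c$-orbits, possibly together with the singleton $\{0\}$. If $0 \notin S$ then $d$ is a multiple of $c$ and $S$ splits into $d/c$ blocks $n_1^c,\ldots,n_{d/c}^c$, yielding the first line of (\ref{eqn:aform}); if $0 \in S$ then $d-1$ is a multiple of $c$ and the remaining entries form $(d-1)/c$ such blocks, yielding the second line.

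The main obstacle is nothing deep: it is really just carefully translating the equivalence in $\mathbb{A}_N^d$ into a statement about set-invariance under multiplication-by-$m$, and then using that $N$ is prime to get a free action on $\mathbb{Z}_N^{\times}$. Once these two observations are in place, the classification of $H$-invariant subsets of $\mathbb{Z}_N$ is immediate, and the trivial case $c=1$ (where $n_k^1 = [n_k]$ and every tuple is vacuously $H$-invariant) needs no separate treatment.
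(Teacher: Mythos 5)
Your proposal is correct, and it takes a genuinely different and more conceptual route than the paper. The paper works forward from an element $m$ of given order: it follows the chain $mn_1\equiv n_2$, $mn_2\equiv n_3,\ldots$ by hand, splitting into the cases $|m|>d$ and $|m|\le d$ and then into $c\nmid d,\, c\nmid d-1$ versus the divisible cases, each time deriving a contradiction $m^r=1$ from the leftover remainder; only at the very end does it invoke orbit--stabilizer to get $|\mathcal{O}|=(N-1)/c$. You instead \emph{define} $c$ as the order of the stabilizer (so (\ref{eqn:orbitorder}) and $c\mid N-1$ are immediate), use the uniqueness of subgroups of each order in the cyclic group $\mathbb{Z}_N^{\times}$ to identify the stabilizer with $\langle g^{(N-1)/c}\rangle$, and then observe that stabilization of $[n]$ is exactly invariance of the $d$-element set $\{n_1,\ldots,n_d\}$ under this subgroup, which acts freely on $\mathbb{Z}_N\setminus\{0\}$ and fixes $0$. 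The classification of invariant sets as unions of size-$c$ orbits (plus possibly $\{0\}$) then delivers both the divisibility condition and the normal form (\ref{eqn:aform}) in one stroke. Your argument is shorter, avoids the somewhat informal ``without loss of generality'' relabelings in the paper's chain-following, and handles $c=1$ uniformly; the paper's hands-on version has the minor advantage of making the explicit cycle relations (\ref{eqn:aeqns}) visible, which it reuses verbatim when writing down (\ref{eqn:amform}), but the mathematical content is the same. One small presentational point: you should state explicitly that an $H$-invariant subset of $\mathbb{Z}_N$ is a disjoint union of $H$-orbits (it follows since orbits partition $\mathbb{Z}_N$ and invariance means the set contains the full orbit of each of its points), but this is routine.
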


\begin{proof}
Let $m \in \mathbb{Z}_N^{\times}$; we determine which elements of $\mathbb{A}_N^d$ are stabilized by $m$ based on the order of $m$. In particular, we will break the argument into two cases: $|m| = c > d$ and $|m| = c \leq d$. We begin with the former.\\

I. Assume $|m| = c > d$.\\

We show that no element in $\mathbb{A}_N^d$ can be stabilized by
$m$. Let $[n] = [n_1,\ldots,n_d] \in \mathbb{A}_N^d$, $n_j \neq 0$ for
all $j = 1,\ldots,d$, and suppose
\begin{eqnarray*}
m \cdot [n] & = & [n],\\
\Longrightarrow m \cdot [n_1,\ldots,n_d] & = & [n_1,\ldots,n_d],\\
\Longrightarrow [mn_1,\ldots,mn_d] & = & [n_1,\ldots,n_d].
\end{eqnarray*}
Therefore, $mn_1 \equiv n_j \text{ mod } N$ for some $j \in \{1,\ldots,d\}$, and because the order of $n_1,\ldots,n_d$ does not matter, without loss of generality we have two choices:
\begin{displaymath}
mn_1 \equiv \left\{
\begin{array}{l}
n_1 \text{ mod } N, \\
n_2 \text{ mod } N.
\end{array}
\right.
\end{displaymath}
If $mn_1 \equiv n_1 \text{ mod } N$, then $m = 1$ and we have a contradiction to the assumption $|m| = c > d$. Therefore, $mn_1 \equiv n_2 \text{ mod } N$ must hold. Continuing, we see that $mn_2 \equiv n_j \text{ mod } N$ for some $j \in \{1,\ldots,d\}$. Without loss of generality, we now have three choices:
\begin{displaymath}
mn_2 \equiv \left\{
\begin{array}{l}
n_1 \text{ mod } N, \\
n_2 \text{ mod } N, \\
n_3 \text{ mod } N.
\end{array}
\right.
\end{displaymath}
If $mn_2 \equiv n_1 \text{ mod } N$, then, combining this with the fact that $mn_1 \equiv n_2 \text{ mod } N$, we see that $m^2 = 1$. However, this contradicts our initial assumption, and so is eliminated from consideration. Similarly, $mn_2 \equiv n_2 \text{ mod } N$ implies $m = 1$ and again leads to a contradiction. Therefore, $mn_2 \equiv n_3 \text{ mod } N$ must hold. Continuing in the same manner, we see:
\begin{displaymath}
\begin{array}{rcccl}
mn_1 & \equiv & mn_1 & \equiv & n_2 \text{ mod } N, \\
mn_2 & \equiv & m^2n_1 & \equiv & n_3 \text{ mod } N, \\
mn_3 & \equiv & m^3n_1 & \equiv & n_4 \text{ mod } N, \\
&& \vdots && \\
mn_{d-1} & \equiv & m^{d-1}n_1 & \equiv & n_d \text{ mod } N.
\end{array}
\end{displaymath}
Therefore, we must have $mn_d \equiv m^dn_1 \equiv n_1 \text{ mod }
N$, which implies $m^d = 1$. Since this contradicts our initial
assumption, we see that no element $m \in \mathbb{Z}_N^{\times}$ with
$|m| = c > d$ can stabilize an element of $\mathbb{A}_N^d$ of the form
$[n_1,\ldots,n_d]$, $n_j \neq 0$ for all $j = 1,\ldots,d$. The
argument for elements of the form $[0,n_1,\ldots,n_{d-1}]$, $n_j \neq
0$ for all $j = 1,\ldots,d-1$, follows similarly.\\

II. Assume $|m| = c \leq d$.\\

We show an element of $\mathbb{A}_N^d$ is stabilized by $m$ if and only if $c \mid d$ or $c \mid d-1$. First, suppose $c \nmid d$  and $c \nmid d-1$. Therefore, there exists $q,r \in \mathbb{Z}$ such that
\begin{equation*}
d = qc + r, \quad q \geq 0, \enspace 1 < r < c.
\end{equation*}
Let $[n] = [n_1,\ldots,n_d] \in \mathbb{A}_N^d$, $n_j \neq 0$ for all
$j = 1,\ldots,d$, and suppose $m \cdot [n] = [n]$. Following the same
argument as in part I of this proof, we see:
\begin{displaymath}
\begin{array}{rcccl}
mn_1 & \equiv & mn_1 & \equiv & n_2 \text{ mod } N, \\
mn_2 & \equiv & m^2n_1 & \equiv & n_3 \text{ mod } N, \\
&& \vdots && \\
mn_{c-1} & \equiv & m^{c-1}n_1 & \equiv & n_c \text{ mod } N, \\
mn_c & \equiv & m^cn_1 & \equiv & n_1 \text{ mod } N,
\end{array}
\end{displaymath}
where the last line results from the fact that $|m| = c \leq d$. Continuing, we see there are two possibilities for $mn_{c+1}$:
\begin{equation*}
mn_{c+1} \equiv \left\{
\begin{array}{l}
n_j \text{ mod } N \text{ for some } j \in \{1,\ldots,c\}, \\
n_{c+2} \text{ mod } N.
\end{array}
\right.
\end{equation*}
If $mn_{c+1} \equiv n_j \text{ mod } N$ for some $j \in \{1,\ldots,c\}$, then $mn_{c+1} \equiv mn_{j-1} \text{ mod } N$, where $n_0 := n_c \text{ mod } N$. However, this would imply that $n_{c+1} \equiv n_{j-1} \text{ mod } N$, a contradiction. Therefore, $mn_{c+1} \equiv mn_{c+2} \text{ mod } N$ must hold, and we can continue with the previous line of reasoning to obtain:
\begin{displaymath}
\begin{array}{rcccl}
mn_{c+1} & \equiv & mn_{c+1} & \equiv & n_{c+2} \text{ mod } N, \\
mn_{c+2} & \equiv & m^2n_{c+1} & \equiv & n_{c+3} \text{ mod } N, \\
&& \vdots && \\
mn_{2c - 1} & \equiv & m^{c-1}n_{c+1} & \equiv & n_{2c} \text{ mod } N, \\
mn_{2c} & \equiv & m^cn_{c+1} & \equiv & n_{c+1} \text{ mod } N.
\end{array}
\end{displaymath}
Continuing with the pattern that has now been established, we arrive at:
\begin{displaymath}
\begin{array}{rcccl}
mn_{qc+1} & \equiv & mn_{qc+1} & \equiv & n_{qc+2} \text{ mod } N, \\
mn_{qc+2} & \equiv & m^2n_{qc+1} & \equiv & n_{qc+3} \text{ mod } N, \\
&& \vdots && \\
mn_{qc+r-1} & \equiv & m^{r-1}n_{qc+1} & \equiv & n_{qc+r} \text{ mod } N.
\end{array}
\end{displaymath}
We must then have:
\begin{equation*}
mn_{qc+r} \equiv m^rn_{qc+1} \equiv n_{qc+1} \text{ mod } N,
\end{equation*}
which in turn implies $m^r = 1$, a contradiction. Therefore, no
element of $\mathbb{A}_N^d$ of the form $[n_1,\ldots,n_d]$, $n_j \neq
0$ for all $j = 1,\ldots,d$, can be stabilized by an $m \in
\mathbb{Z}_N^{\times}$ with $|m| = c \leq d$ such that $c \nmid d$ and
$c \nmid d-1$. The argument for elements of $\mathbb{A}_N^d$ of the
form $[0,n_1,\ldots,n_{d-1}]$, $n_j \neq 0$ for all $j = 1,\ldots,d-1$, follows similarly.\\

We now shift our attention to $m \in \mathbb{Z}_N^{\times}$ such that $c \mid d$ or $c \mid d-1$. In either case there exists a $q \in \mathbb{Z}$ such that,
\begin{equation*}
d = qc, \enspace q \geq 0, \quad \text{or} \quad d-1 = qc, \enspace q \geq 0.
\end{equation*}

Using the same argument that we just completed, we see that if $c \mid
d$ then $m$ stabilizes certain elements of the form
$[n_1,\ldots,n_d]$, $n_j \neq 0$ for all $j = 1,\ldots,d$, whereas if
$c \mid d-1$ then $m$ stabilizes certain elements of the form
$[0,n_1,\ldots,n_{d-1}]$, $n_j \neq 0$ for all $j = 1,\ldots,d-1$. The only difference in reasoning comes at the end, where in this case we do not run into a contradiction. Furthermore, looking back at the above reasoning, we see all elements $[n_1,\ldots,n_d] \in \mathbb{A}_N^d$ stabilized by $m$ must satisfy:
\begin{equation}\label{eqn:aeqns}
mn_{jc+k} \equiv m^kn_{jc+1} \equiv n_{jc+k+1}, \quad \forall \enspace
j = 0,\ldots,q-1, \enspace k = 1,\ldots,c-1,
\end{equation}\\
where $d = qc$ or $d - 1 = qc$, depending on the type of element of $\mathbb{A}_N^d$. By equation (\ref{eqn:aeqns}), any element in $\mathbb{A}_N^d$ stabilized by $m$ can be written in one of two general forms:
\begin{equation}\label{eqn:amform}
[n] = \left\{
\begin{array}{l}
[n_1,mn_1,\ldots,m^{c-1}n_1,\ldots,n_{\frac{d}{c}},mn_{\frac{d}{c}},\ldots,m^{c-1}n_{\frac{d}{c}}] \\

[0,n_1,mn_1,\ldots,m^{c-1}n_1,\ldots,n_{\frac{d-1}{c}},mn_{\frac{d-1}{c}},\ldots,m^{c-1}n_{\frac{d-1}{c}}],
\end{array}
\right.
\end{equation}
where $n_j \neq 0$ and $n_j \neq n_k$ for all $j,k = 1,\ldots,d/c$ or
$j,k = 1,\ldots,(d-1)/c$, depending on the form of $[n]$.
Also, since $|m| = c$, there must exist a primitive root mod $N$, $g$, such that
\begin{equation}\label{eqn:mprimroot}
m = g^{(N-1)/c},
\end{equation}
noting that $c \mid N-1$ since the order of any group element must divide the order of the group. Combining equations (\ref{eqn:amform}) and (\ref{eqn:mprimroot}) gives (\ref{eqn:aform}).\\

In order to prove (\ref{eqn:orbitorder}), we exploit the fact that
\begin{equation}\label{eqn:orbitorder2}
|\mathcal{O}_{[n]}| = \frac{N-1}{|(\mathbb{Z}_N^{\times})_{[n]}|}.
\end{equation}
By (\ref{eqn:orbitorder2}), we need only compute the stabilizer of $[n]$ in $\mathbb{Z}_N^{\times}$, that is $(\mathbb{Z}_N^{\times})_{[n]}$. But (\ref{eqn:amform}) and (\ref{eqn:mprimroot}) easily give
\begin{equation*}
(\mathbb{Z}_N^{\times})_{[n]} = \{g^{l(N-1)/c} : l = 0,\ldots,c-1 \}.
\end{equation*}
Clearly $|(\mathbb{Z}_N^{\times})_{[n]}| = c$, thus proving (\ref{eqn:orbitorder}).
\end{proof}

Before counting the number of orbits $\mathbb{A}_N^d$, we prove two lemmas that simplify this task.  The first shows that the choice of $g$ in (\ref{eqn:akc}) does not matter.
\begin{lemma}\label{lem:g1g2}
If $g_1$ and $g_2$ are two primitive roots mod $N$, and $n_1 \in \mathbb{Z}_N$, $n_1 \neq 0$, then
\begin{equation*} 
[n_1,g_1^{(N-1)/c}n_1,\ldots,g_1^{(c-1)(N-1)/c}n_1] = [n_1,g_2^{(N-1)/c}n_1,\ldots,g_2^{(c-1)(N-1)/c}n_1].
\end{equation*}
\end{lemma}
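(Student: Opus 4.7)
The plan is to exploit the cyclic structure of $\mathbb{Z}_N^\times$. Since $N$ is prime, $\mathbb{Z}_N^\times$ is cyclic of order $N-1$, and in a finite cyclic group there is a \emph{unique} subgroup of each order dividing the group order. This uniqueness of subgroups is the one fact that does the real work; everything else is bookkeeping about the equivalence relation defining $\mathbb{A}_N^d$.

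First I would set $h_i := g_i^{(N-1)/c}$ for $i=1,2$. Each $h_i$ has order exactly $c$ in $\mathbb{Z}_N^\times$ (it is a $c$-th root of unity and no smaller power can be $1$, since $g_i$ is primitive). Hence $\langle h_1\rangle$ and $\langle h_2\rangle$ are both subgroups of $\mathbb{Z}_N^\times$ of order $c$. By the uniqueness of subgroups of a given order in a cyclic group, $\langle h_1\rangle = \langle h_2\rangle$, so as subsets of $\mathbb{Z}_N^\times$ we have
\begin{equation*}
\{h_1^j : 0 \leq j \leq c-1\} \;=\; \{h_2^j : 0 \leq j \leq c-1\}.
\end{equation*}

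Next, multiplication by $n_1 \in \mathbb{Z}_N^\times$ is a bijection of $\mathbb{Z}_N$, so applying it to both sides preserves the set equality:
\begin{equation*}
\{n_1,\, h_1 n_1,\, \ldots,\, h_1^{c-1} n_1\} \;=\; \{n_1,\, h_2 n_1,\, \ldots,\, h_2^{c-1} n_1\}.
\end{equation*}
Because $h_i$ has order $c$, the $c$ elements $h_i^j n_1$ ($0 \leq j \leq c-1$) are pairwise distinct for each $i$, so each displayed collection is genuinely a set of $c$ elements rather than a multiset. Finally, recalling that $[\,\cdot\,]$ denotes the equivalence class under (\ref{eqn:A element equiv}), i.e.\ tuples modulo permutation, this set equality immediately translates into the claimed equality of equivalence classes in $\mathbb{A}_N^c$, completing the proof.

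I do not anticipate any real obstacle: the argument is essentially a one-line observation once it is phrased in terms of the unique order-$c$ subgroup of the cyclic group $\mathbb{Z}_N^\times$. The only small subtlety worth emphasizing in the write-up is that the equivalence relation on $\mathbb{A}_N^d$ means we may regard $[n_1, h_i n_1, \ldots, h_i^{c-1} n_1]$ as the underlying (unordered) set, which is precisely why the set equality of the generating subgroups is enough.
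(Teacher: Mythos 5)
Your proposal is correct and follows essentially the same route as the paper: the paper observes that $\{1,g_1^{(N-1)/c},\ldots,g_1^{(c-1)(N-1)/c}\}$ and $\{1,g_2^{(N-1)/c},\ldots,g_2^{(c-1)(N-1)/c}\}$ are both the complete solution set of $x^c \equiv 1 \bmod N$, which is exactly your unique order-$c$ subgroup of the cyclic group $\mathbb{Z}_N^{\times}$, and then multiplies by $n_1$ and invokes invariance under rearrangement. Your write-up is if anything slightly more careful (noting distinctness of the $c$ elements and the bijectivity of multiplication by $n_1$), but it is the same proof.
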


\begin{proof}
Since $g_1$ and $g_2$ are both primitive roots mod $N$, the sets
\\$\{1,g_1^{(N-1)/c},\ldots,g_1^{(c-1)(N-1)/c}\}$ and
$\{1,g_2^{(N-1)/c},\ldots,g_2^{(c-1)(N-1)/c}\}$ are both complete
solution sets to $x^c \equiv 1 \text{ mod } N$. Therefore
$(n_1,g_2^{(N-1)/c}n_1,\ldots,g_2^{(c-1)(N-1)/c}n_1)$ is a
rearrangement of
$(n_1,g_1^{(N-1)/c}n_1,\ldots,g_1^{(c-1)(N-1)/c}n_1)$, and the lemma
follows.
\end{proof}

The second lemma shows that the representation given by (\ref{eqn:aform}) is not unique and gives the instances where confusion can occur.
\begin{lemma}\label{lem:notunique}
Let $[n] \in \mathbb{A}_N^d$ such that $[n]$ can be written in the form
(4.8 b). If $c \mid b$, then $[n]$ can be written in the form (\ref{eqn:aform}) as well.
\end{lemma}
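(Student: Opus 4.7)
The plan is to show that when $c \mid b$, each $b$-block $n_k^b$ in the given representation of $[n]$ decomposes as an unordered concatenation of $b/c$ many $c$-blocks, so replacing each block immediately converts the form with parameter $b$ into the form with parameter $c$. The underlying reason is that the cyclic subgroups of $\mathbb{Z}_N^{\times}$ nest: $\langle g^{(N-1)/c}\rangle \subseteq \langle g^{(N-1)/b}\rangle$ whenever $c \mid b$.

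Concretely, I would fix a primitive root $g$ mod $N$ and set $h := g^{(N-1)/b}$, so $|h| = b$ and, by definition (\ref{eqn:akc}), $n_k^b = [n_k, h n_k, h^2 n_k, \ldots, h^{b-1} n_k]$ is the $\langle h\rangle$-orbit of $n_k$. Since $c \mid b$, the element $h^{b/c}$ has order $c$ and satisfies $h^{b/c} = g^{(N-1)/c}$, so the subgroup $K := \langle g^{(N-1)/c}\rangle$ of order $c$ is contained in $\langle h\rangle$ and partitions it into $b/c$ cosets $h^i K$, $i = 0, 1, \ldots, b/c - 1$. Regrouping the entries of $n_k^b$ according to these cosets, the $i$-th coset contributes the subtuple
\[
[h^i n_k,\; g^{(N-1)/c} h^i n_k,\; \ldots,\; g^{(c-1)(N-1)/c} h^i n_k] \;=\; (h^i n_k)^c,
\]
and because $\mathbb{A}_N^d$ is defined up to permutation we may write $n_k^b = [(n_k)^c, (h n_k)^c, \ldots, (h^{b/c - 1} n_k)^c]$.

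Applying this decomposition block by block to the given representation $[n] = [0, n_1^b, \ldots, n_{(d-1)/b}^b]$ produces $\tfrac{d-1}{b}\cdot\tfrac{b}{c} = \tfrac{d-1}{c}$ blocks of size $c$ preceded by a $0$, which is exactly the form (\ref{eqn:aform}) with parameter $c$ (second case, since $c\mid b\mid d-1$ forces $c\mid d-1$); the first case is handled by the analogous count $\tfrac{d}{b}\cdot\tfrac{b}{c} = \tfrac{d}{c}$. The only verification required is that the resulting $c$-blocks $(h^i n_k)^c$ have nonzero and pairwise-distinct entries both within a single block and across all blocks, which is immediate since the original representation is an element of $\mathbb{A}_N^d$ and so its coordinates are already distinct and nonzero apart from the singleton $0$. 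No deep step is needed; this is essentially a coset computation in $\mathbb{Z}_N^{\times}$, and the only real bookkeeping point is to check that the $b/c$ subblocks cut out of each $n_k^b$ match the definition of a $c$-block exactly, which is what lemma \ref{lem:g1g2} (choice of primitive root is irrelevant) also reassures us about.
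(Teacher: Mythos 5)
Your proof is correct and follows essentially the same route as the paper: both decompose each $b$-block $n_k^b$ into the $b/c$ blocks $(h^i n_k)^c$, $i=0,\ldots,b/c-1$, with $h=g^{(N-1)/b}$, which is precisely the coset decomposition of $\langle g^{(N-1)/b}\rangle$ by $\langle g^{(N-1)/c}\rangle$. The paper carries this out by explicit index bookkeeping ($n_{\frac{jd}{b}+k}=g^{j(N-1)/b}\tilde n_k$), whereas you phrase it directly in coset language, but the underlying argument is identical.
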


\begin{proof}
We assume $[n] = [\tilde{n}_1^b,\tilde{n}_2^b,\ldots,\tilde{n}_{d/b}^b]$ and show that we can rewrite this as $[n] = [n_1^c,n_2^c,\ldots,n_{d/c}^c]$. If $[n] = [0,\tilde{n}_1^b,\tilde{n}_2^b,\ldots,\tilde{n}_{(d-1)/b}^b]$ then a similar proof shows how to rewrite this as $[n] = [0,n_1^c,n_2^c,\ldots,n_{(d-1)/c}^c]$.  Recall
\begin{equation*}
\tilde{n}_k^b = [\tilde{n}_k,g^{(N-1)/b}\tilde{n}_k,\ldots,g^{(b-1)(N-1)/b}\tilde{n}_k].
\end{equation*}
Let $a = b/c$ and set $n_1 = \tilde{n}_1$; we want to construct $n_1^c$ out of elements of $\tilde{n}_1^b$, where:
\begin{equation*}
\tilde{n}_1^b = [\tilde{n}_1, g^{(N-1)/b} \tilde{n}_1, \ldots, g^{(b-1)(N-1)/b}\tilde{n}_1^b].
\end{equation*}
Since the order of elements does not matter, we may pick them however we like and rearrange them as we wish. We have that $n_1^c$ is formed out of the following elements of $\tilde{n}_1^b$:
\begin{eqnarray*}
n_1^c & = & [\tilde{n}_1, g^{a(N-1)/b}\tilde{n}_1, \ldots, g^{(c-1)a(N-1)/b}\tilde{n}_1] \\
& = & [n_1, g^{a(N-1)/b}n_1, \ldots, g^{(c-1)a(N-1)/b}n_1] \\
& = & [n_1, g^{a(N-1)/ca}n_1, \ldots, g^{(c-1)a(N-1)/ca}n_1] \\
& = & [n_1, g^{(N-1)/c}n_1, \ldots, g^{(c-1)(N-1)/c}n_1].
\end{eqnarray*}
Likewise, set $n_k = \tilde{n}_k$ for $k = 2,\ldots,d/b$, and construct $n_k^c$ in a similar manner. For the next $c$-tuple, set $n_{\frac{d}{b}+1} = g^{(N-1)/b}\tilde{n}_1$. We then have:
\begin{eqnarray*}
n_{\frac{d}{b}+1}^c & = & [g^{(N-1)/b}\tilde{n}_1, g^{(a+1)(N-1)/b}\tilde{n}_1, \ldots, g^{((c-1)a+1)(N-1)/b}\tilde{n}_1] \\
& = & [n_{\frac{d}{b}+1}, g^{a(N-1)/b}n_{\frac{d}{b}+1}, \ldots, g^{(c-1)a(N-1)/b}n_{\frac{d}{b}+1}] \\
& = & [n_{\frac{d}{b}+1}, g^{(N-1)/c}n_{\frac{d}{b}+1}, \ldots, g^{(c-1)(N-1)/c}n_{\frac{d}{b}+1}].
\end{eqnarray*}
In general,
\begin{equation*}
n_{\frac{jd}{b}+k} = g^{j(N-1)/b}\tilde{n}_k, \quad \forall \enspace j =
0,\ldots,a-1, \enspace k = 1,\ldots,d/b,
\end{equation*}
and the resulting $n_{\frac{jd}{b}+k}^c$ follows similarly.
\end{proof}

\subsection{Proof of Theorem \ref{thm:main2}}

Using theorem \ref{thm: orbit structure} as well as lemmas \ref{lem:g1g2} and \ref{lem:notunique}, we now count the number of orbits of $\mathbb{A}_N^d$. By proposition \ref{prop:1-1equiv2} this is the same as counting the number of inequivalent harmonic frames, and so will complete the proof of theorem \ref{thm:main2}. Let $\gamma_c$ denote the total number of orbits of $\mathbb{A}_N^d$ with $(N-1)/c$ elements. Then, by theorem \ref{thm: orbit structure}, the total number of orbits of $\mathbb{A}_N^d$ is given by
\begin{equation}\label{eqn: gamma sum}
\gamma_1 + \sum_{\substack{c|d \\ c > 1}}\gamma_c + \sum_{\substack{c|d-1 \\ c > 1}}\gamma_c.
\end{equation}
Notice the similarity between equations (\ref{eqn: gamma sum}) and (\ref{eqn: alpha sum}). In fact, we shall prove that
\begin{equation*}
\gamma_c = \alpha_c, \quad \forall \enspace c \in \mathbb{N} \text{ such that } c \mid N-1 \text{ and } c \mid d \text{ or } c \mid d-1.
\end{equation*}
\begin{theorem}\label{thm:numframes}
Let $N$ be a prime number, $1 < d < N$, $c \mid N-1$, $c > 1$, and let $\beta_c$ denote the cumulative order of all orbits of size $(N-1)/c$. Furthermore, let $\gamma_c$ denote the number of orbits of $\mathbb{A}_N^d$ of size $(N-1)/c$, so that
\begin{equation*}
\gamma_c = \frac{c\beta_c}{N-1}.
\end{equation*}
If $c \mid d$, then $\beta_c$ is given by the following backwards recursive formula:
\begin{equation*}
\beta_c = \frac{(N-1)(N-1-c)\cdots(N-1-(\frac{d}{c}-1)c)}{c^{\frac{d}{c}}(d/c)!} - \sum_{\substack{c < b < N \\ c|b, \thinspace b|d}} \beta_b.
\end{equation*}
If $c \mid d-1$, then $\beta_c$ is given by the following backwards recursive formula:
\begin{equation*}
\beta_c = \frac{(N-1)(N-1-c)\cdots(N-1-(\frac{d-1}{c}-1)c)}{c^{\frac{d-1}{c}}((d-1)/c)!} - \sum_{\substack{c < b < N \\ c|b, \thinspace b|d-1}} \beta_b.
\end{equation*}
 The number of orbits of $\mathbb{A}_N^d$ of size $N-1$, denoted $\gamma_1$, is given by:
\begin{equation*}
\gamma_1 = \frac{1}{N-1}{N \choose d} - \sum_{\substack{c|d \\ c > 1}} \frac{\gamma_c}{c} - \sum_{\substack{c|d-1 \\ c > 1}} \frac{\gamma_c}{c}.
\end{equation*}
\end{theorem}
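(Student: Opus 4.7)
The proof hinges on identifying each $[n] \in \mathbb{A}_N^d$ with its underlying $d$-subset of $\mathbb{Z}_N$ and exploiting the fact that $\mathbb{Z}_N^{\times}$ is cyclic of order $N-1$. For each divisor $c$ of $N-1$, let $H_c := \langle g^{(N-1)/c}\rangle$ denote the unique subgroup of order $c$; its cosets partition $\mathbb{Z}_N^{\times}$ into $(N-1)/c$ blocks of size $c$, and by Lemma~\ref{lem:g1g2} these blocks are independent of the choice of primitive root $g$. Theorem~\ref{thm: orbit structure} then translates the representation (\ref{eqn:aform}) into a simple set-theoretic condition: $[n]$ admits the form (4.8\,$c$) with $c \mid d$ precisely when its underlying set is a disjoint union of $d/c$ such cosets, and it admits the form (4.8\,$c$) with $c \mid d-1$ precisely when it equals $\{0\}$ together with $(d-1)/c$ such cosets. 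Consequently the total number of elements of $\mathbb{A}_N^d$ in form (4.8\,$c$) equals $\binom{(N-1)/c}{d/c}$ or $\binom{(N-1)/c}{(d-1)/c}$, and expanding these binomial coefficients recovers exactly the leading terms appearing in the two recursions for $\beta_c$.

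To convert this raw count into a formula for $\beta_c$ itself, I would invoke Lemma~\ref{lem:notunique}: every element admitting the representation (4.8\,$b$) with $c \mid b$ (and $b$ dividing the appropriate one of $d$ or $d-1$) also admits the representation (4.8\,$c$). Conversely, since every stabilizer is a subgroup of the cyclic group $\mathbb{Z}_N^{\times}$ it has the form $H_b$ for some $b \mid N-1$, and Theorem~\ref{thm: orbit structure} constrains which $b$ can actually arise. Partitioning the elements in form (4.8\,$c$) according to their exact stabilizer order then yields, in the $c \mid d$ case,
\begin{equation*}
\binom{(N-1)/c}{d/c} \;=\; \sum_{\substack{b \mid d \\ c \mid b}} \beta_b,
\end{equation*}
and the analogous identity with $d-1$ in place of $d$. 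Isolating the $b = c$ summand produces the stated backward recursion. The relation $\gamma_c = c\beta_c/(N-1)$ is immediate from $|\mathcal{O}_{[n]}| = (N-1)/c$ for every $[n]$ in such an orbit.

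For $\gamma_1$, I would sum orbit sizes over the partition of $\mathbb{A}_N^d$ into orbits: orbits of size $N-1$ contribute $\gamma_1(N-1)$, while orbits of size $(N-1)/c$ with $c>1$ contribute $\beta_c$ in aggregate. Setting this equal to $|\mathbb{A}_N^d| = \binom{N}{d}$ and using $\beta_c = (N-1)\gamma_c/c$ gives
\begin{equation*}
\binom{N}{d} \;=\; \gamma_1(N-1) + \sum_{\substack{c \mid d \\ c>1}} \frac{(N-1)\gamma_c}{c} + \sum_{\substack{c \mid d-1 \\ c>1}} \frac{(N-1)\gamma_c}{c},
\end{equation*}
which after dividing through by $N-1$ rearranges to the stated formula. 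The main delicacy I anticipate is the bookkeeping that separates the two flavors of (\ref{eqn:aform}): one must verify that the recursion for $c \mid d$ involves only $b$ with $b \mid d$ (and not $b \mid d-1$), since an element whose underlying set contains $0$ is a genuinely different subset of $\mathbb{Z}_N$ from one that does not. This separation is already baked into Theorem~\ref{thm: orbit structure}, so once it is invoked carefully the combinatorial identities fall out without further obstacle.
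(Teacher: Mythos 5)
Your proposal is correct and follows essentially the same route as the paper: count the elements admitting the form (\ref{eqn:aform}), then subtract the contributions of elements whose stabilizer is strictly larger (the paper's Lemma \ref{lem:notunique} step), and obtain $\gamma_1$ from the orbit-size partition of $|\mathbb{A}_N^d|=\binom{N}{d}$. Your only (cosmetic) departure is packaging the leading count as $\binom{(N-1)/c}{d/c}$ via cosets of the unique subgroup $H_c\subset\mathbb{Z}_N^{\times}$, which is numerically identical to the paper's sequential count $\frac{(N-1)(N-1-c)\cdots}{c^{d/c}(d/c)!}$ and arguably cleaner.
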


\begin{proof}
We prove the formula for $\beta_c$ when $c \mid d$, noting that the proof is identical for the case when $c \mid d-1$. In order to accomplish this task, we will build up the formula using combinatorial arguments. By theorem \ref{thm: orbit structure}, the elements we are counting are of the form $[n_1^c,n_2^c,\ldots,n_{d/c}^c]$, where
\begin{equation*}
n_k^c = [n_k,g^{(N-1)/c}n_k,\ldots,g^{(c-1)(N-1)/c}n_k], \quad n_k \neq 0.
\end{equation*}
It is clear then, that we have $N-1$ choices for $n_1$, $N-1-c$ choices for $n_2$, $N-1-2c$ choices for $n_3$, and so on. Continuing to the end, we see there are $N-1-(d/c-1)c$ choices for $n_{d/c}$. Furthermore, by lemma \ref{lem:g1g2}, the choice of $g$ does not matter, and so does not add any new elements to count. Therefore, at the moment, we have
\begin{equation*}
(N-1)(N-1-c)\cdots(N-1-(d/c-1)c)
\end{equation*}
elements. Fixing the choice of $n_1$ temporarily, it is clear that if we chose any of $g^{(N-1)/c}n_1,\ldots,g^{(c-1)(N-1)/c}n_1$ instead of $n_1$, then we would have a rearranged version of $n_1^c$. However, the order of elements does not matter in $\mathbb{A}_N^d$, and so these choices are in fact the same as choosing $n_1$. Since there are $c$ such elements (including $n_1$), the number of distinct choices for $n_1$ is in fact $(N-1)/c$. Similarly, we must divide the number of choices for each $n_k$ by a factor of $c$, thus giving
\begin{equation*}
\frac{(N-1)(N-1-c)\cdots(N-1-(d/c-1)c)}{c^{d/c}}
\end{equation*}
elements. Furthermore, again recalling that the order of elements does not matter, we see that the order in which we choose $n_1,\ldots,n_{d/c}$ does not matter either. Consequently, we are now down to
\begin{equation}\label{eqn:number of aform}
\frac{(N-1)(N-1-c)\cdots(N-1-(d/c-1)c)}{c^{d/c}(d/c)!}
\end{equation}
elements. We note that equation (\ref{eqn:number of aform}) gives the number of elements of the form (\ref{eqn:aform}). However, we are not counting all elements of the form (\ref{eqn:aform}), but only those that are in an orbit of size $(N-1)/c$. In fact, by lemma \ref{lem:notunique} any element in an orbit of size $(N-1)/b$, where $c \mid b$ and $b \mid d$, can be rewritten as $[n_1^c,n_2^c,\ldots,n_{d/c}^c]$. Therefore, we must subtract all elements in orbits of size $(N-1)/b$, where $c \mid b$ and $b \mid d$, thus giving:
\begin{equation*}
\beta_c = \frac{(N-1)(N-1-c)\cdots(N-1-(\frac{d}{c}-1)c)}{c^{\frac{d}{c}}(d/c)!} - \sum_{\substack{c < b < N \\ c|b, \thinspace b|d}}\beta_b.
\end{equation*}
The equation for $\gamma_1$ follows from
\begin{equation*}
|\mathbb{A}_N^d| = \gamma_1(N-1) + \sum_{\substack{c|d \\ c > 1}}\left(\frac{N-1}{c}\right)\gamma_c + \sum_{\substack{c|d-1 \\ c > 1}}\left(\frac{N-1}{c}\right)\gamma_c,
\end{equation*}
and the fact that $|\mathbb{A}_N^d| = {N \choose d}$.
\end{proof}

\begin{example}
We apply theorem \ref{thm:numframes} for the case when $d = 3$ and $N \equiv 1 \text{ mod } 3$. In this case, $c = 3$ divides $d$ as well as $N-1$, while $c = 2$ divides $d-1$ as well as $N-1$. Therefore we compute:
\begin{equation*}
\beta_3  =  \frac{N-1}{3} \quad \text{and} \quad \beta_2  =  \frac{N-1}{2},
\end{equation*}
which in turn gives:
\begin{equation*}
\gamma_3 = 1 \quad \text{and} \quad \gamma_2 = 1.
\end{equation*}
Thus,
\begin{eqnarray*}
\gamma_1 & = & \frac{1}{N-1}{N \choose 3} - \frac{1}{3} - \frac{1}{2}\\
& = & \frac{N^2 - 2N}{6} - \frac{2}{6} - \frac{3}{6}\\
& = & \frac{N^2 - 2N - 5}{6},
\end{eqnarray*}
and so the total number of orbits is:
\begin{eqnarray*}
\gamma_1 + \gamma_2 + \gamma_3 & = & \frac{N^2 - 2N - 5}{6} + 1 + 1\\
& = & \frac{N^2 - 2N +7}{6}.
\end{eqnarray*}
Notice this is the same result as proposition \ref{prop:d=3}.
\end{example}

\section{The Symmetry Group}\label{sec:the symmetry group}

We now turn our attention to the symmetry group of prime order harmonic frames. The following theorem proves the existence of a particular subgroup of $\mathrm{Sym}(\Phi_n)$ that is dependent on the generators $n_1, \ldots, n_d$ as well as the order of $\mathcal{O}_{[n]}$.
\begin{theorem}\label{thm:subgroup}
Let $\mathcal{O}_{[n]}$ be an orbit of $\mathbb{A}_N^d$ such that $|\mathcal{O}_{[n]}| = (N-1)/c$, and let $\Phi_n$ be the harmonic frame that corresponds to $\mathcal{O}_{[n]}$ under the one-to-one correspondence described by proposition \ref{prop:1-1equiv2}. Then
\begin{equation*}
\langle\mathrm{diag}(\omega^{n_1},\ldots,\omega^{n_d}), Q \rangle \subseteq \mathrm{Sym}(\Phi_n),
\end{equation*}
where $\mathrm{diag}(\omega^{n_1},\ldots,\omega^{n_d})$ denotes a $d \times d$ matrix with $\omega^{n_1}, \ldots, \omega^{n_d}$ on the diagonal and zeros elsewhere, $\omega = e^{2\pi i/N}$, $Q$ is a $d \times d$ permutation matrix dependent on $\Phi_n$, and $|\langle Q \rangle | = c$.
\end{theorem}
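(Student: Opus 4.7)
The plan has two essentially independent parts: showing that the diagonal matrix $D := \mathrm{diag}(\omega^{n_1},\ldots,\omega^{n_d})$ lies in $\mathrm{Sym}(\Phi_n)$, and constructing the permutation matrix $Q$ of order $c$ that also lies in $\mathrm{Sym}(\Phi_n)$.

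For the diagonal matrix, the verification is a one-line direct computation. Recall $\phi_m = \frac{1}{\sqrt{d}}(\omega^{mn_1},\ldots,\omega^{mn_d})$, so
\begin{equation*}
D\phi_m = \tfrac{1}{\sqrt{d}}(\omega^{n_1}\omega^{mn_1},\ldots,\omega^{n_d}\omega^{mn_d}) = \tfrac{1}{\sqrt{d}}(\omega^{(m+1)n_1},\ldots,\omega^{(m+1)n_d}) = \phi_{m+1 \bmod N}.
\end{equation*}
Hence $D$ cyclically permutes the vectors of $\Phi_n$, is clearly unitary (diagonal with unit modulus entries), and therefore belongs to $\mathrm{Sym}(\Phi_n)$.

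The construction of $Q$ is where Theorem \ref{thm: orbit structure} does the real work. Since $|\mathcal{O}_{[n]}| = (N-1)/c$, the element $m_0 := g^{(N-1)/c}$ of $\mathbb{Z}_N^{\times}$ has order $c$ and stabilizes $[n]$. Unpacking this, there exists $\sigma \in S_d$ such that $m_0 n_k \equiv n_{\sigma(k)} \bmod N$ for every $k = 1,\ldots,d$ (the orbit form $[n_1^c,\ldots,n_{d/c}^c]$ or $[0,n_1^c,\ldots,n_{(d-1)/c}^c]$ in (\ref{eqn:aform}) exhibits $\sigma$ explicitly as a product of $c$-cycles on the nonzero coordinates, fixing the zero coordinate when $c\mid d-1$). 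Let $Q$ be the permutation matrix defined by $(Qv)_k := v_{\sigma(k)}$. Then
\begin{equation*}
(Q\phi_j)_k = (\phi_j)_{\sigma(k)} = \tfrac{1}{\sqrt{d}}\omega^{j n_{\sigma(k)}} = \tfrac{1}{\sqrt{d}}\omega^{j m_0 n_k} = (\phi_{m_0 j})_k,
\end{equation*}
so $Q\phi_j = \phi_{m_0 j}$. As $j$ runs over $\mathbb{Z}_N$ and $m_0\in\mathbb{Z}_N^{\times}$, so does $m_0 j$; thus $Q$ permutes the frame and lies in $\mathrm{Sym}(\Phi_n)$.

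Finally, for $|\langle Q\rangle|=c$: since $Q^k$ corresponds to multiplication by $m_0^k$ on the indices, and the $d$ entries of $[n]$ are distinct, the equality $Q^k = I$ is equivalent to $m_0^k n_j \equiv n_j \bmod N$ for every nonzero $n_j$, which is equivalent to $m_0^k \equiv 1 \bmod N$. Because $m_0$ has order exactly $c$ in $\mathbb{Z}_N^{\times}$, this forces $|\langle Q\rangle| = c$. The only mild subtlety is the case $c\mid d-1$, where one coordinate is $0$ and is fixed by $\sigma$; the nonzero block still guarantees $\sigma^k\neq\mathrm{id}$ for $0<k<c$, so the order count is unaffected. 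Assembling the three pieces yields $\langle D,Q\rangle\subseteq\mathrm{Sym}(\Phi_n)$ as claimed; the main conceptual step was invoking Theorem \ref{thm: orbit structure} to produce $m_0$ together with the matching permutation $\sigma$, and the remainder is routine verification.
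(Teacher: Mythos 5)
Your proof is correct and follows essentially the same route as the paper's: the diagonal matrix is verified by the direct computation $D\phi_m = \phi_{m+1 \bmod N}$, and $Q$ is obtained from the order-$c$ stabilizer element $g^{(N-1)/c}$ supplied by theorem \ref{thm: orbit structure}, realized as a permutation of the generators (equivalently, of the rows of the frame matrix). The only difference is that the paper wraps these verifications in a Gram-matrix/circulant framework culminating in the necessary condition $PGP^{\star}=G$, but that machinery is only exploited in the subsequent corollary, so your leaner direct coordinate check loses nothing for this theorem.
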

\begin{proof}
Similar to the proof of theorem \ref{thm:equiv}, let $\Phi_M$ denote
the $d \times N$ matrix whose columns are the elements of
$\Phi_n$. We note that $U \in \mathrm{Sym}(\Phi_n)$ if and only if
there exists an $N \times N$ permutation matrix $P$ such that
\begin{equation}\label{eqn:unitary perm}
U\Phi_M = \Phi_M P.
\end{equation}
First using the left hand side of (\ref{eqn:unitary perm}), we have
\begin{equation}\label{eqn:lhs}
(U \Phi_M)^{\star}(U \Phi_M) = \Phi^{\star}_M U^{\star} U \Phi_M =
\Phi^{\star}_M \Phi_M,
\end{equation}
and then equivalently for the right hand side of (\ref{eqn:unitary
  perm}),
\begin{equation}\label{eqn:rhs}
(\Phi_M P)^{\star} (\Phi_M P) = P^{\star} \Phi^{\star}_M \Phi_M P.
\end{equation}
Combining (\ref{eqn:lhs}) and (\ref{eqn:rhs}) we obtain the following
necessary condition for (\ref{eqn:unitary perm}),
\begin{equation*}
\Phi^{\star}_M \Phi_M = P^{\star} \Phi^{\star}_M \Phi_M P,
\end{equation*}
or equivalently,
\begin{equation}\label{eqn:row col perm}
P \Phi^{\star}_M \Phi_M P^{\star}= \Phi^{\star}_M \Phi_M.
\end{equation}
The matrix $\Phi^{\star}_M \Phi_M$ is called the Gram matrix and has the following form:
\begin{equation}
(\Phi^{\star}_M \Phi_M)_{j,k} = \langle \phi_k, \phi_j \rangle =
\sum_{l=1}^de^{2\pi in_l(k-j)/N}, \quad \forall \enspace j,k = 0,\ldots,N-1.
\end{equation}
Two elements $\langle \phi_k, \phi_j \rangle$ and $\langle \phi_{k'},
\phi_{j'} \rangle$ of $\Phi_M^{\star} \Phi_M$ are equal if and only if
\begin{equation}\label{eqn:ip equality}
\sum_{l=1}^de^{2\pi in_l(k-j)/N} = \sum_{l=1}^de^{2\pi in_l(k'-j')/N}.
\end{equation}
Using the same minimum polynomial argument as the one found in the
proof of theorem \ref{thm:equiv}, we see that (\ref{eqn:ip equality})
holds for off diagonal elements of $\Phi_M^{\star} \Phi_M$ if and only if there exists a permutation $\mu \in S_d$ such that
\begin{equation}\label{eqn:ip term by term}
n_l(k-j) \equiv n_{\mu(l)}(k'-j') \text{ mod } N, \quad \forall
\enspace l = 1,\ldots,d, \enspace k \neq j, \enspace k' \neq j'.
\end{equation}
(\ref{eqn:ip term by term}) is in fact the same condition as
(\ref{eqn:A element equiv}), and so we may define the following equivalence relation between  the off diagonal entries of
$\Phi_M^{\star} \Phi_M$ and the elements of $\mathbb{A}_N^d$:
\begin{equation}\label{eqn:off diag equiv relation}
\langle \phi_k, \phi_j \rangle \sim (k-j \text{ mod } N) \cdot [n],
\quad k \neq j.
\end{equation}
For the diagonal entries of $\Phi_M^{\star} \Phi_M$, we define the
representative $[0]$ as
\begin{equation*}
[0] := [\underbrace{0,\ldots,0}_d],
\end{equation*}
and extend our equivalence relation to diagonal elements:
\begin{equation}\label{eqn:diag equiv relation}
\langle \phi_j, \phi_j \rangle \sim [0].
\end{equation}
In order to ease notation, we set $0 \cdot [n] := [0]$, and thus can write $k \cdot [n]$ for all $k \in \mathbb{Z}_N$.
Combining (\ref{eqn:off diag equiv relation}) and (\ref{eqn:diag equiv
  relation}), we see $\sim$ induces an equivalence relation between
the set of inner products, $\{\langle \phi_j, \phi_k \rangle : j,k
= 0,\ldots,N-1\}$, and the set $\mathbb{A}_N^d \cup \{[0]\}$. Defining the matrix $G$ as
\begin{equation}\label{eqn: G formula}
G_{j,k} := (k - j) \cdot [n], \quad \forall \enspace j,k \in \mathbb{Z}_N
\end{equation}
we then have an equivalence relation between $\Phi_M^{\star} \Phi_M$ and $G$:
\begin{equation}\label{eqn:Gram rep}
\Phi_M^{\star} \Phi_M \sim G.
\end{equation}
Combining (\ref{eqn:row col perm}) with (\ref{eqn:Gram rep}) gives the
following necessary condition for (\ref{eqn:unitary perm}) to hold:
\begin{equation}\label{eqn:PG = GP}
PGP^{\star} = G.
\end{equation}
Returning to (\ref{eqn: G formula}), we see $G$ has the form:
\begin{equation}
G = \left(
\begin{array}{cccccc}
a_0 & a_{N-1} & a_{N-2} & \cdots & a_2 & a_1 \\
a_1 & a_0 & a_{N-1} & a_{N-2} & \cdots & a_2 \\
a_2 & a_1 & a_0 & \ddots & \ddots & \vdots \\
\vdots & \ddots & \ddots & \ddots & a_{N-1} & a_{N-2} \\
a_{N-2} & \cdots & a_2 & a_1 & a_0 & a_{N-1} \\
a_{N-1} & a_{N-2} & \cdots & a_2 & a_1 & a_0
\end{array}
\right),
\end{equation}
where $a_k = k \cdot [n]$ for all $k \in \mathbb{Z}_N$. Therefore $G$ is a circulant matrix, and is completely determined by its first column vector. The permutation matrix
\begin{equation}\label{eqn:basic circulant}
C := \left(
\begin{array}{cccccc}
0 & 1 & 0 & 0 & \cdots & 0 \\
0 & 0 & 1 & 0 & \cdots & 0 \\
0 & 0 & 0 & 1 & \cdots & 0 \\
\vdots & \vdots & \vdots & \vdots & \ddots & \vdots \\
0 & 0 & 0 & 0 & \cdots & 1 \\
1 & 0 & 0 & 0 & \cdots & 0
\end{array}
\right).
\end{equation}
is called the basic circulant permutation matrix. A matrix $A$ can be written in the form
\begin{equation}\label{eqn: circ sum}
A = \sum_{k=0}^{N-1}a_k C^k,
\end{equation}
if and only if $A$ is circulant. Therefore, $G$ can be written in the form (\ref{eqn: circ sum}), and as such, it is clear that
\begin{equation*}
C^kG(C^k)^{\star} = G, \quad \forall \enspace k = 0, \ldots, N-1.
\end{equation*}
A simple computation shows that when $U = \mathrm{diag}(\omega^{n_1}, \ldots, \omega^{n_d})$, one has
\begin{equation*}
U^k \Phi_M = \Phi_M C^k, \quad \forall \enspace k = 0, \ldots, N-1.
\end{equation*}
Thus, regardless of the size $\mathcal{O}_{[n]}$,
\begin{equation*}
\mathrm{diag}(\omega^{kn_1}, \ldots, \omega^{kn_d}) \in \mathrm{Sym}(\Phi_n), \quad \forall \enspace k = 0, \ldots, N-1.
\end{equation*}
Note this proves the theorem for the case $|\mathcal{O}_{[n]}| = N-1$. \\

To prove the existence of the matrix $Q \in \mathrm{Sym}(\Phi_n)$ with $|\langle Q \rangle | = c$, suppose that $\Phi_n$ corresponds to $\mathcal{O}_{[n]}$ such that $|\mathcal{O}_{[n]}| = (N-1)/c$, where $c > 1$. Note that by theorem \ref{thm: orbit structure} we have
\begin{equation*}
g^{k(N-1)/c} m \cdot [n] = m \cdot [n], \quad \forall \enspace
m \in \mathbb{Z}_N^{\times}, \enspace k = 1,\ldots, c,
\end{equation*}
and in particular
\begin{equation*}
g^{k(N-1)/c} \cdot [n] = [n], \quad \forall \enspace k = 1, \ldots, c.
\end{equation*}
Therefore, the action of $g^{(N-1)/c}$ on $n$ defines a permutation $\rho \in
S_d$ such that
\begin{equation}\label{eqn:gen perm}
(n_{\rho^k(1)},\ldots,n_{\rho^k(d)}) = g^{k(N-1)/c} \cdot (n_1,\ldots,n_d), \quad \forall \enspace k = 1, \ldots, c.
\end{equation}
Since a permutation of the generators $n_1,\ldots,n_d$ is equivalent to
a permutation of the rows of $\Phi_M$, (\ref{eqn:gen perm}) implies
the existence of a $d \times d$ permutation matrix $Q$, where $Q$ is
the matrix equivalent of $\rho$, as well as an $N \times N$ permutation matrix $P_0$, such that
\begin{equation*}
Q^k \Phi_M = \Phi_M P_0^k, \quad \forall \enspace k = 1,\ldots,c.
\end{equation*}
In other words, $Q \in \mathrm{Sym}(\Phi_n)$, and since
$\mathrm{diag}(\omega^{n_1},\ldots,\omega^{n_d}) \in
\mathrm{Sym}(\Phi_n)$ as well, we must have
\begin{equation*}
 \langle
\mathrm{diag}(\omega^{n_1},\ldots,\omega^{n_d}), Q \rangle \subseteq \mathrm{Sym}(\Phi_n).
\end{equation*}
\end{proof}

\begin{corollary}
Let $\mathcal{O}_{[n]}$ be an orbit of $\mathbb{A}_N^d$ such that $|\mathcal{O}_{[n]}| = N-1$, and let $\Phi_n$ be the harmonic frame that corresponds to $\mathcal{O}_{[n]}$ under the one-to-one correspondence described by proposition \ref{prop:1-1equiv2}. Then
\begin{equation*}
\mathrm{Sym}(\Phi_n) = \langle\mathrm{diag}(\omega^{n_1},\ldots,\omega^{n_d}) \rangle,
\end{equation*}
where $\mathrm{diag}(\omega^{n_1},\ldots,\omega^{n_d})$ denotes a $d \times d$ matrix with $\omega^{n_1}, \ldots, \omega^{n_d}$ on the diagonal and zeros elsewhere, and $\omega = e^{2\pi i/N}$.
\end{corollary}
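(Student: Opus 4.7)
The inclusion $\langle \mathrm{diag}(\omega^{n_1},\ldots,\omega^{n_d}) \rangle \subseteq \mathrm{Sym}(\Phi_n)$ is immediate from Theorem \ref{thm:subgroup} in the case $c=1$, where the matrix $Q$ there is forced to be trivial. So the plan is to establish the reverse inclusion. I would reuse the Gram matrix analysis developed inside the proof of Theorem \ref{thm:subgroup}: if $U \in \mathrm{Sym}(\Phi_n)$, then there must exist an $N \times N$ permutation matrix $P$ with $U \Phi_M = \Phi_M P$, and this $P$ is forced to satisfy $P G P^{\star} = G$, where $G$ is the circulant matrix with entries $G_{j,k} = (k-j) \cdot [n]$.

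The central observation is that the hypothesis $|\mathcal{O}_{[n]}| = N-1$ forces $(\mathbb{Z}_N^{\times})_{[n]} = \{1\}$, so the classes $a_k := k \cdot [n]$, $k \in \mathbb{Z}_N^{\times}$, are $N-1$ pairwise distinct elements of $\mathbb{A}_N^d$, all different from $a_0 = [0]$ (since $[n]$ has distinct entries and is therefore not the zero class). Writing $P$ as the matrix associated to a permutation $\pi \in S_N$, the condition $P G P^{\star} = G$ translates, entry by entry, into $(\pi(k) - \pi(j)) \cdot [n] = (k-j) \cdot [n]$ for all $j, k \in \mathbb{Z}_N$. Distinctness of the $a_k$ then forces $\pi(k) - \pi(j) \equiv k - j \pmod N$ for every $j \neq k$, so $\pi$ is an additive translation on $\mathbb{Z}_N$, which means $P = C^s$ for some $s$, where $C$ is the basic circulant permutation matrix from (\ref{eqn:basic circulant}).

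To finish, I would invoke the identity $U_0^k \Phi_M = \Phi_M C^k$ that was established inside the proof of Theorem \ref{thm:subgroup}, with $U_0 := \mathrm{diag}(\omega^{n_1},\ldots,\omega^{n_d})$. Combined with $U \Phi_M = \Phi_M P = \Phi_M C^s$, this gives $U \Phi_M = U_0^s \Phi_M$. Since $\Phi_n$ is a frame for $\mathbb{C}^d$, the $N$ columns of $\Phi_M$ span $\mathbb{C}^d$, so $\Phi_M$ has a right inverse, and right-multiplying yields $U = U_0^s \in \langle U_0 \rangle$, completing the inclusion. The main obstacle in the argument is the passage from $P G P^{\star} = G$ to ``$\pi$ is an additive translation''; this is exactly where the trivial stabilizer hypothesis is essential, and care must also be taken with the diagonal entries of $G$ and with ruling out $a_0 = a_k$ for $k \in \mathbb{Z}_N^{\times}$.
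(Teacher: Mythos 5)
Your proposal is correct and follows essentially the same route as the paper: both reduce to the necessary condition $PGP^{\star}=G$ on the circulant matrix $G$, use the trivial-stabilizer consequence $k\cdot[n]=k'\cdot[n]\iff k\equiv k'\bmod N$ of $|\mathcal{O}_{[n]}|=N-1$ to force $\sigma(j)-\sigma(k)\equiv j-k$, and conclude $P=C^{s}$. Your write-up is in fact slightly more complete, since you explicitly close the loop from $P=C^{s}$ back to $U=\mathrm{diag}(\omega^{n_1},\ldots,\omega^{n_d})^{s}$ using the right-invertibility of $\Phi_M$, a step the paper leaves implicit.
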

\begin{proof}
Recall the matrices $G$ and $C$ from the proof of theorem \ref{thm:subgroup}, as given by equations (\ref{eqn: G formula}) and (\ref{eqn:basic circulant}), respectively. We will show that $P = C^k$, $k = 0, \ldots, N-1$, are the only matrices satisfying the necessary condition given by equation (\ref{eqn:PG = GP}). Combining the fact that  $\mathcal{O}_{[n]} = \{m \cdot [n] : m \in \mathbb{Z}_N^{\times}\}$ with the assumption that $|\mathcal{O}_{[n]}| = N - 1$, we have
\begin{equation}\label{eqn: k=k'}
k \cdot [n] = k' \cdot [n] \iff k \equiv k' \text{ mod } N.
\end{equation}
Furthermore, let $\sigma \in S_N$ be the permutation corresponding to the permutation matrix $P$. Equation (\ref{eqn:PG = GP}) can be rewritten as
\begin{equation}\label{eqn: term by term perm}
(\sigma(j) - \sigma(k)) \cdot [n] = (j - k) \cdot [n], \quad \forall \enspace j, k \in \mathbb{Z}_N.
\end{equation}
Combining equations (\ref{eqn: k=k'}) and (\ref{eqn: term by term perm}), one obtains
\begin{equation}\label{eqn: term by term perm 2}
\sigma(j) - \sigma(k) = j - k, \quad \forall \enspace j,k \in \mathbb{Z}_N.
\end{equation}
One can think of (\ref{eqn: term by term perm 2}) as a system of $N^2$ linear equations in the $N$ variables $\sigma(0), \ldots, \sigma(N-1)$, with the two added constraints:
\begin{enumerate}
\item
$\sigma(k) \in \mathbb{Z}_N$ for all $k \in \mathbb{Z}_N$,
\item
$\sigma(j) = \sigma(k)$ if and only if $j = k$.
\end{enumerate}
Clearly (\ref{eqn: term by term perm 2}) is an overdetermined system. However, (\ref{eqn: term by term perm 2}) has $N - 1$ independent equations, given by:
\begin{eqnarray*}
\sigma(1) - \sigma(0) & \equiv & 1 \text{ mod } N \\
\sigma (2) - \sigma(0) & \equiv & 2 \text{ mod } N \\
& \vdots & \\
\sigma(N - 1) - \sigma(0) & \equiv & N - 1 \text{ mod } N.
\end{eqnarray*}
Thus $\sigma(0)$ is a free variable, and can be assigned any value from $\mathbb{Z}_N$. The remaining values of $\sigma$ are then given by:
\begin{equation*}
\sigma(j) \equiv j + \sigma(0) \text{ mod } N, \quad \forall \enspace j = 1, \ldots, N - 1.
\end{equation*}
In conclusion, there are $N$ possible permutations, each corresponding to a different value of $\sigma(0)$. In particular, we have the following correspondence:
\begin{equation*}
\sigma(0) = k \iff P = C^k.
\end{equation*}
\end{proof}
The following conjecture asserts that the subgroup described in theorem \ref{thm:subgroup} in fact is the symmetry group for all prime order harmonic frames, not just those corresponding to orbits of size $N-1$.
\begin{conjecture}
Let $\mathcal{O}_{[n]}$ be an orbit of $\mathbb{A}_N^d$ such that $|\mathcal{O}_{[n]}| = (N-1)/c$, and let $\Phi_n$ be the harmonic frame that corresponds to $\mathcal{O}_{[n]}$ under the one-to-one correspondence described by proposition \ref{prop:1-1equiv2}. Then
\begin{equation*}
\mathrm{Sym}(\Phi_n) = \langle\mathrm{diag}(\omega^{n_1},\ldots,\omega^{n_d}), Q \rangle,
\end{equation*}
where $\mathrm{diag}(\omega^{n_1},\ldots,\omega^{n_d})$ denotes a $d \times d$ matrix with $\omega^{n_1}, \ldots, \omega^{n_d}$ on the diagonal and zeros elsewhere, $\omega = e^{2\pi i/N}$, $Q$ is a $d \times d$ permutation matrix dependent on $\Phi_n$, and $|\langle Q \rangle | = c$.
\end{conjecture}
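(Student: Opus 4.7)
The plan is to push the Gram-matrix argument from the corollary through for arbitrary orbit sizes. Following the proofs of Theorem \ref{thm:subgroup} and its corollary, every $U \in \mathrm{Sym}(\Phi_n)$ induces a permutation $\sigma \in S_N$ whose associated permutation matrix $P$ must satisfy $PGP^\star = G$. Unpacking this via the equivalence relation $\langle \phi_k, \phi_j \rangle \sim (k-j)\cdot [n]$, and using the identification (from Theorem \ref{thm: orbit structure}) of the stabilizer of $[n]$ in $\mathbb{Z}_N^\times$ as $H := \langle g^{(N-1)/c} \rangle$, the condition $PGP^\star = G$ becomes the slope condition
\[
\frac{\sigma(j) - \sigma(k)}{j - k} \in H, \quad \forall \enspace j \neq k \in \mathbb{Z}_N.
\]
Conversely, because $\Phi_M \Phi_M^\star = (N/d) I$, any $\sigma$ satisfying this slope condition lifts to a unique unitary in $\mathrm{Sym}(\Phi_n)$ by declaring $U\phi_m := \phi_{\sigma(m)}$ and extending linearly; consistency and unitarity follow from the fact that the slope condition preserves every Gram entry. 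The task thus reduces to characterizing the permutations of $\mathbb{Z}_N$ satisfying the slope condition.

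My claim is that these permutations are exactly the affine maps $\sigma(j) = aj + b$ with $a \in H$ and $b \in \mathbb{Z}_N$. Sufficiency is immediate, since $(aj + b - ak - b)/(j-k) = a \in H$. For necessity I would first normalize. By the proof of Theorem \ref{thm:subgroup}, $\mathrm{diag}(\omega^{n_1}, \ldots, \omega^{n_d})$ induces $j \mapsto j + 1$ on the indices while $Q$ induces $j \mapsto g^{(N-1)/c} j$, so $\langle \mathrm{diag}(\omega^{n_1},\ldots,\omega^{n_d}), Q \rangle$ already realizes every affine permutation with multiplicative part in $H$. Composing $\sigma$ with a suitable element of this subgroup reduces to $\sigma(0) = 0$ and $\sigma(1) = 1$, and the goal is to show $\sigma = \mathrm{id}$. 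Setting $k = 0$ in the slope condition gives $\sigma(j) \in jH$; setting $k = 1$ gives $\sigma(j) \in 1 + (j-1)H$. Proceeding by induction on $j$, the further constraints $\sigma(j) - \sigma(k) \in (j-k) H$ coming from already-determined values $\sigma(k)$ should progressively pin down $\sigma(j) = j$.

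The main obstacle is executing this induction: a single intersection $jH \cap (1 + (j-1)H)$ need not be a singleton, so one must combine many slope conditions to kill off non-identity branches, and the coset arithmetic depends delicately on how $H$ sits inside $\mathbb{Z}_N^\times$. A separate—and, for $N \geq 4$, fatal—difficulty arises in the extremal case $c = N-1$, which occurs only when $d = N-1$ with $[n] = [1, 2, \ldots, N-1]$: there $H = \mathbb{Z}_N^\times$, the slope condition is vacuous, $G = NI - J$, and $\mathrm{Sym}(\Phi_n) \cong S_N$ strictly contains $\langle \mathrm{diag}(\omega^{n_1}, \ldots, \omega^{n_d}), Q \rangle$. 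A complete proof of the conjecture therefore needs either to restrict to $c < N-1$ or to treat this degenerate orbit as a genuine exception. Once the affine characterization is established in the remaining range, the inclusion $\mathrm{Sym}(\Phi_n) \subseteq \langle \mathrm{diag}(\omega^{n_1},\ldots,\omega^{n_d}), Q\rangle$ follows immediately from the observation above, completing the argument.
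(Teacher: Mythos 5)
First, a point of order: the paper offers no proof of this statement---it is left as an open conjecture---so there is no argument of the author's to compare yours against; your proposal must stand on its own. Its strongest contribution is the observation about $c=N-1$. That case forces $d=N-1$ and $[n]=[1,2,\ldots,N-1]$ (the only subset of size $N-1$ stable under all of $\mathbb{Z}_N^{\times}$), the frame is the regular simplex, all off-diagonal Gram entries equal $-1/(N-1)$, so every permutation of the $N$ vectors preserves the Gram matrix and lifts to a unitary: $\mathrm{Sym}(\Phi_n)\cong S_N$ has order $N!$, whereas $\langle\mathrm{diag}(\omega^{n_1},\ldots,\omega^{n_d}),Q\rangle$ has order at most $N(N-1)$. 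For prime $N\geq 5$ this genuinely refutes the conjecture as stated; it holds in this case only for $N=3$. Any correct version must exclude the simplex orbit. Your reduction for the remaining cases is also sound: every $U\in\mathrm{Sym}(\Phi_n)$ induces a permutation $\sigma$ of $\mathbb{Z}_N$, the condition $PGP^{\star}=G$ is equivalent (via the stabilizer $H=\langle g^{(N-1)/c}\rangle$ of $[n]$) to the slope condition $(\sigma(j)-\sigma(k))/(j-k)\in H$, and conversely any such $\sigma$ lifts to a unitary because the spanning sequences $(\phi_m)_m$ and $(\phi_{\sigma(m)})_m$ then have equal Gram matrices (spanning is what you need here, which tightness supplies; the identity $\Phi_M\Phi_M^{\star}=(N/d)I$ is otherwise not doing work).

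The gap is the necessity half of your affine characterization, and it is the entire mathematical content of the conjecture. You normalize to $\sigma(0)=0$, $\sigma(1)=1$ and propose to pin down $\sigma(j)=j$ by intersecting the cosets $jH$, $1+(j-1)H$, $\sigma(k)+(j-k)H$, but you concede yourself that these intersections need not be singletons and you give no mechanism for closing the induction; as written, the argument does not terminate. What you are trying to prove is precisely a known theorem of Carlitz ($H$ the quadratic residues) and McConnel (general proper subgroup $H<\mathbb{Z}_N^{\times}$): a bijection $f$ of $\mathbb{F}_p$ with $(f(x)-f(y))/(x-y)\in H$ for all $x\neq y$ must be of the form $f(x)=ax+b$ with $a\in H$. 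That result is true but not elementary---the standard proofs go through character sums or the structure of sharply transitive sets---so the step cannot be waved through by the sketched induction. Citing Carlitz--McConnel (or reproducing a proof) would complete your argument for $1\leq c<N-1$ and, combined with your simplex counterexample, would settle the conjecture in corrected form; without it, the proposal establishes only the inclusion $\supseteq$, which is already Theorem \ref{thm:subgroup}.
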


\section{Closing remarks}\label{sec:closing remarks}

We have enumerated all harmonic frames for $\mathbb{C}^d$ with $N$ elements, where $N$ is a prime number.  A natural question is how to extend these results to all $N$.  Certain problems arise, however, with the techniques used in this paper, since in several instances the fact that $N$ is prime is a key element. In particular, for a general $N$, distinct harmonic frames will arise from groups other than $\mathbb{Z}_N$. Also, even for those harmonic frames that do come from $\mathbb{Z}_N$, new representations must be developed since in general $\mathbb{Z}_N^{\times} \subseteq \{1, \ldots, N\}$.

\section{Acknowledgements}

I would like to thank John Benedetto for introducing me to this problem, as well as Kasso Okoudjou for numerous helpful discussions on this topic.

\bibliography{NumFramesDFTbib}

\nocite{dummit:aa}
\nocite{kovacevic:lbbaf1}
\nocite{kovacevic:lbbaf2}
\nocite{waldron:itf}
\nocite{kovacevic:qfewe}
\nocite{zimmermann:ntffd}

\end{document}